\documentclass[10pt]{amsart}
\usepackage[initials]{amsrefs}
\usepackage{amscd,amsmath,amsfonts,amssymb}

\setcounter{MaxMatrixCols}{10}

\providecommand{\U}[1]{\protect\rule{.1in}{.1in}}

\newtheorem{theorem}{Theorem}[section]
\newtheorem{proposition}[theorem]{Proposition}
\newtheorem{corollary}[theorem]{Corollary}

\numberwithin{equation}{section}

\begin{document}
\title{H\"{o}lder's inequality: some recent and unexpected applications}
\author[Albuquerque]{N. Albuquerque}
\address{Departamento de Matem\'{a}tica, \\
\indent Universidade Federal da Para\'{\i}ba, \\
\indent 58.051-900 - Jo\~{a}o Pessoa, Brazil.}
\email{ngalbqrq@gmail.com}
\author[Ara\'{u}jo]{G. Ara\'{u}jo}
\address{Departamento de Matem\'{a}tica, \\
\indent Universidade Federal da Para\'{\i}ba, \\
\indent
58.051-900 - Jo\~{a}o Pessoa, Brazil.}
\email{gdasaraujo@gmail.com}
\author[Pellegrino]{D. Pellegrino}
\address{Departamento de Matem\'{a}tica, \\
\indent Universidade Federal da Para\'{\i}ba, \\
\indent
58.051-900 - Jo\~{a}o Pessoa, Brazil.}
\email{dmpellegrino@gmail.com and pellegrino@pq.cnpq.br}
\author[Seoane]{J.B. Seoane-Sep\'{u}lveda}
\address{Departamento de An\'{a}lisis Matem\'{a}tico,\\
\indent Facultad de Ciencias Matem\'{a}ticas, \\
\indent Plaza de Ciencias 3, \\
\indent Universidad Complutense de Madrid,\\
\indent Madrid, 28040, Spain.\\
\indent
\textsc{\ and }\\
\indent
Instituto de Ciencias Matem\'aticas -- ICMAT \\
\indent Madrid, Spain.}
\email{jseoane@mat.ucm.es}
\thanks{D. Pellegrino and J.B. Seoane-Sep\'ulveda are supported by CNPq
Grant 401735/2013-3 - PVE - Linha 2.}
\keywords{H\"{o}lder's inequality, Minkowki's inequality, interpolation,
Bohr radius, Quantum Information Theory, Hardy-Littlewood's inequality,
Bohnenblust-Hille's inequality, Khinchine's inequality,
Kahane-Salem-Zygmund's inequality, absolutely summing operators.}
\subjclass[2010]{46B70, 46G25, 47A63, 47L22, 47H60, 30B50.}

\begin{abstract}
H\"{o}lder's inequality, since its appearance in 1888, has played a
fundamental role in Mathematical Analysis and it is, without any doubt, one
of the milestones in Mathematics. It may seem strange that, nowadays, it
keeps resurfacing and bringing new insights to the mathematical community.
In this expository article we show how a variant of H\"{o}lder's inequality
(although well-known in PDEs) was essentially overlooked in Functional
Analysis and has had a crucial (and in some sense unexpected) influence in
very recent and major breakthroughs in Mathematics. Some of these recent
advances appeared in 2012-2014 and include the theory of Dirichlet series,
the famous Bohr radius problem, certain classical inequalities (such as
Bohnenblust--Hille or Hardy--Littlewood), or even Mathematical Physics.
\end{abstract}

\maketitle
\tableofcontents

\section{Introduction}

When Leonard James Rogers (1862-1933) and Otto H{\"{o}}lder (1859-1937)
discovered, independently, the famous inequality that (nowadays) holds H{%
\"{o}}lder's name (1889, \cite{holder1889}), they could have never imagined
that, at that precise moment, they had just started a \textquotedblleft
revolution\textquotedblright\ in Functional Analysis (we refer to \cite{Maligranda} for a detailed and historical exposition). This tool is a
fundamental inequality between integrals and an indispensable tool for the
study of, among others, $L_{p}$ spaces. Let us recall the classical $L_{p}$
version of this inequality.

\begin{theorem}[H{\"o}lder's inequality, 1889]
Let $(\Omega, \Sigma, \mu)$ be a measure space and let $p, q \in [1, \infty]$
with $1/p + 1/q = 1$ (H{\"o}lder's conjugates). Then, for all measurable
real or complex valued functions $f$ and $g$ on $\Omega$,
\begin{equation*}
\int |f g| d \mu \le \left( \int |f|^p d \mu \right)^{1/p} \left( \int |g|^q
d \mu \right)^{1/q}.
\end{equation*}
\end{theorem}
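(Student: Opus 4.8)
The plan is to derive the inequality from a single pointwise numerical estimate and then integrate. First I would dispose of the endpoint and degenerate cases. If $p=1$ (hence $q=\infty$), then $|fg|\le\|g\|_{\infty}\,|f|$ $\mu$-almost everywhere, and integrating gives the claim; the case $p=\infty$ is symmetric. If one of $\int|f|^{p}\,d\mu$, $\int|g|^{q}\,d\mu$ equals $0$, then $fg=0$ a.e.\ and the inequality is trivial; if one of them equals $+\infty$ (and the other is not $0$), the right-hand side is $+\infty$ and there is nothing to prove. Hence I may assume $1<p<\infty$ and that
\begin{equation*}
A:=\Bigl(\int|f|^{p}\,d\mu\Bigr)^{1/p}\in(0,\infty),\qquad B:=\Bigl(\int|g|^{q}\,d\mu\Bigr)^{1/q}\in(0,\infty).
\end{equation*}

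The core is Young's inequality: for all real numbers $a,b\ge0$ and conjugate exponents $1<p,q<\infty$ one has $ab\le a^{p}/p+b^{q}/q$. I would prove this from the convexity of the exponential function (equivalently, concavity of $\log$): when $a,b>0$, write $ab=\exp\bigl(\tfrac1p\log a^{p}+\tfrac1q\log b^{q}\bigr)$ and apply $\exp(\tfrac1p x+\tfrac1q y)\le\tfrac1p e^{x}+\tfrac1q e^{y}$ with $x=\log a^{p}$ and $y=\log b^{q}$; the case $ab=0$ is immediate. (This is the only step that genuinely uses analytic input, though it is entirely standard; alternatively one could argue by elementary calculus, minimizing $t\mapsto t^{p}/p+b^{q}/q-tb$ over $t\ge 0$.)

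Finally I would normalize. Put $F=|f|/A$ and $G=|g|/B$, so that $\int F^{p}\,d\mu=\int G^{q}\,d\mu=1$. Applying Young's inequality pointwise gives $FG\le F^{p}/p+G^{q}/q$ everywhere, whence $FG$ is integrable, and integrating,
\begin{equation*}
\int FG\,d\mu\le\frac1p\int F^{p}\,d\mu+\frac1q\int G^{q}\,d\mu=\frac1p+\frac1q=1.
\end{equation*}
Multiplying through by $AB$ yields $\int|fg|\,d\mu\le AB$, which is the assertion. I do not expect a serious obstacle: the one subtlety is bookkeeping — the degenerate and endpoint cases must be settled \emph{before} the normalization, since dividing by $A$ or $B$ presupposes both are finite and nonzero — and the only nontrivial ingredient is the convexity estimate underlying Young's inequality.
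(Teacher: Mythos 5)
Your proof is correct and complete: the reduction to the endpoint and degenerate cases, the pointwise Young inequality $ab\le a^{p}/p+b^{q}/q$ obtained from convexity of the exponential, and the normalization $F=|f|/A$, $G=|g|/B$ followed by integration constitute the standard and fully rigorous argument. The paper itself states this theorem only as classical background and offers no proof to compare against, so there is no divergence to report; your handling of the cases where $A$ or $B$ is $0$ or $+\infty$ before normalizing is exactly the bookkeeping one needs.
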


If one has $p,q\in (1,\infty )$, $f\in L_{p}(\mu )$, and $g\in L_{q}(\mu )$,
then this inequality becomes an equality if and only if $|f|^{p}$ and $%
|g|^{q}$ are linearly dependent in $L_{1}(\mu )$. When one has $p=q=2$ we
recover a form of the Cauchy-Schwarz inequality (or the
Cauchy-Bunyakovsky-Schwarz inequality). Also, H{\"{o}}lder's inequality is
used to prove Minkowski's inequality (the triangle inequality for $L_{p}$
spaces) and to establish that $L_{q}(\mu )$ is the dual space of $L_{p}(\mu
) $ for $p\in \lbrack 1,\infty )$. Of course, we are all familiar with these
classical applications of H{\"{o}}lder's inequality.

As it happens to almost every important result in mathematics, several
extensions and generalizations of it have appeared along the time; and in
the case of H{\"{o}}lder's inequality, this is not different. One of the
extensions is the variant of H{\"{o}}lder's inequality for mixed $L_{p}$
spaces. This inequality appeared in 1961, in the work of A. Benedek and R.
Panzone \cite{bene}. Mixed $L_{p}$ spaces may be seen as a pure exercise of
abstraction of the original notion of $L_{p}$ spaces, but as a matter of
fact we shall show that the theory developed in \cite{bene} plays a crucial
role in applications to quite different frameworks; it is intriguing that,
although widely known (the paper \cite{bene} has more than 100 citations,
mainly related to PDEs; we refer, for instance to \cite{veve, fff}) it was
overlooked in important fields of mathematics. This gap began to be filled
in 2012-2013, when H{\"{o}}lder's inequality for mixed $L_{p}$ spaces was
re-discovered as an interpolation-type result and we shall show that
different fields of Mathematics and even of Physics were positively
influenced. This expository paper is arranged as follows. Section 2 presents
some motivation to illustrate the subject of this article. Section 3 is
devoted to the aforementioned variant of H\"{o}lder's inequality (H\"{o}%
lder's inequality for mixed sums) with a short proof. This result was only
written in a proper and organized fashion in 1961 (\cite{bene}) but, as it
will be clear along this paper, at least in the topics gathered here
(Functional Analysis, Complex Analysis and Quantum Information Theory) it
was surely not been taken advantage of before 2012. Our approach is quite
different from the one employed in \cite{bene} and we shall follow the lines
of \cite{bps}. Section 4 will recall several useful inequalities that we
shall need and Section 5 is devoted to the recent applications of H\"{o}%
lder's inequality for mixed sums in Functional Analysis and Quantum
Information Theory, culminating with the solution of a classical problem
from Complex Analysis: the Bohr radius problem. Applications to the
improvement of the constants of the Hardy--Littlewood inequality and
separately summing operators are also given.

\section{Motivation: some interpolative puzzles}

\label{puzz}

As a motivation to the subject treated here, let us suppose that we have the
following two inequalities at hand, for certain complex scalar matrix $%
(a_{ij})_{i, j=1}^{N}$:

\begin{equation}
\sum\limits_{i=1}^{N}\left( \sum\limits_{j=1}^{N}\left\vert
a_{ij}\right\vert ^{2}\right) ^{\frac{1}{2}}\leq \mathrm{C}\text{ and }%
\sum\limits_{j=1}^{N}\left( \sum\limits_{i=1}^{N}\left\vert
a_{ij}\right\vert ^{2}\right) ^{\frac{1}{2}}\leq \mathrm{C}  \label{221}
\end{equation}%
for some constant $\mathrm{C}>0$ and all positive integers $N$.

How can one find an optimal exponent $r$ and a constant $\mathrm{C}_{1}>0$
such that%
\begin{equation*}
\left( \sum\limits_{i,j=1}^{N}\left\vert a_{ij}\right\vert ^{r}\right) ^{%
\frac{1}{r}}\leq \mathrm{C}_{1}
\end{equation*}%
for all positive integers $N?$ Moreover, how can one get a good (small)
constant $\mathrm{C}_{1}$?

This question (at least concerning the exponent $r$ can be solved in no less
than two ways: interpolation and H\"{o}lder's inequality).

First note that, by using a consequence of Minkowski's inequality (see \cite%
{garling}), we know that
\begin{equation}
\left( \sum\limits_{i=1}^{N}\left( \sum\limits_{j=1}^{N}\left\vert
a_{ij}\right\vert \right) ^{2}\right) ^{\frac{1}{2}}\leq
\sum\limits_{j=1}^{N}\left( \sum\limits_{i=1}^{N}\left\vert
a_{ij}\right\vert ^{2}\right) ^{\frac{1}{2}}\leq \mathrm{C}.  \label{222}
\end{equation}

If we use H\"{o}lder's inequality twice, we proceed as follows:
\begin{align*}
\sum\limits_{i,j=1}^{N}\left\vert a_{ij}\right\vert ^{\frac{4}{3}} = &
\sum\limits_{i=1}^{N}\left( \sum\limits_{j=1}^{N}\left\vert
a_{ij}\right\vert ^{\frac{2}{3}}\left\vert a_{ij}\right\vert ^{\frac{2}{3}%
}\right) \\
\leq & \sum\limits_{i=1}^{N}\left( \left( \sum\limits_{j=1}^{N}\left\vert
a_{ij}\right\vert ^{2}\right) ^{\frac{1}{3}}\left(
\sum\limits_{j=1}^{N}\left\vert a_{ij}\right\vert \right) ^{\frac{2}{3}%
}\right) \\
\leq & \left( \sum\limits_{i=1}^{N}\left( \sum\limits_{j=1}^{N}\left\vert
a_{ij}\right\vert ^{2}\right) ^{\frac{1}{2}}\right) ^{\frac{2}{3}}\left(
\sum\limits_{i=1}^{N}\left( \sum\limits_{j=1}^{N}\left\vert
a_{ij}\right\vert \right) ^{2}\right) ^{\frac{1}{3}} \\
= & \left[ \sum\limits_{i=1}^{N}\left( \sum\limits_{j=1}^{N}\left\vert
a_{ij}\right\vert ^{2}\right) ^{\frac{1}{2}}\right] ^{\frac{2}{3}}\left[
\left( \sum\limits_{i=1}^{N}\left( \sum\limits_{j=1}^{N}\left\vert
a_{ij}\right\vert \right) ^{2}\right) ^{\frac{1}{2}}\right] ^{\frac{2}{3}}
\leq \mathrm{C}^{\frac{4}{3}}.
\end{align*}

By complex interpolation (see \cite{berg.lofst}) the solution is shorter;
essentially we have two mixed inequalities with exponents $\left( 1,2\right)
$ in equation \eqref{221} and $\left( 2,1\right) $ in equation \eqref{222}.
By interpolating these exponents with $\theta _{1}=\theta _{2}=1/2$ we
obtain an exponent $\left( 4/3,4/3\right) $ with constant $\mathrm{C}$.

The optimality of the exponent $4/3$ can be proved using the
Kahane--Salem--Zygmund inequality (Theorem \ref{Littlewood43}).

The use of H\"{o}lder's inequality as above becomes almost impossible if we
have, for instance, 10 sums, 100 sums, etc. The reader can test the case of
three sums using H\"{o}lder's inequality. More precisely, as a simple
illustration suppose that

\begin{equation*}
\sum\limits_{\sigma (i)=1}^{N}\left( \sum\limits_{\sigma
(j)=1}^{N}\sum\limits_{\sigma (k)=1}^{N}\left\vert a_{ijk}\right\vert
^{2}\right) ^{\frac{1}{2}}\leq \mathrm{C}
\end{equation*}%
for all bijections $\sigma :\left\{ i,j,k\right\} \rightarrow \left\{
i,j,k\right\} $ and all $N.$ How can we find an optimal exponent $r$ and a
constant $\mathrm{C}_{1}$ such that%
\begin{equation*}
\left( \sum\limits_{i,j,k=1}^{N}\left\vert a_{ijk}\right\vert ^{r}\right) ^{%
\frac{1}{r}}\leq \mathrm{C}_{1}
\end{equation*}
for every $N$?

The search for good constants dominating the respective inequalities is
highly important for applications (see Section 5) and has an extra
ingredient when we are using the interpolative approach: the main point is
that different interpolations may result in the same exponent, but the
constants involved differ. Thus, we must investigate what exponents we shall
use to interpolate. More precisely, as we will see in Section 5, the
Bohnenblust--Hille inequality for $3$-linear forms asserts that there is a
constant $\mathrm{C}_{3}\geq 1$ such that, for all $3$--linear forms $T:\ell
_{\infty}^{N}\times \ell _{\infty }^{N}\times \ell _{\infty }^{N}\rightarrow
\mathbb{K}$,
\begin{equation*}
\left( \sum\limits_{i_{1},i_{2},i_{3}=1}^{N}\left\vert
T(e_{i_{^{1}}},e_{i_{2}},e_{i_{3}})\right\vert ^{\frac{3}{2}}\right) ^{\frac{%
2}{3}}\leq \mathrm{C}_{3}\left\Vert T\right\Vert ,
\end{equation*}
and all $N$, where
\begin{equation*}
\Vert T\Vert := \sup_{\left\Vert z^{(1)}\right\Vert=1,\dots,\left\Vert
z^{(m)}\right\Vert =1} \left\vert T\left( z^{\left(1\right) },\ldots
,z^{\left( m\right)}\right) \right\vert \quad \text{ for all } m \in \mathbb{%
N}.
\end{equation*}
However, the exponent $3/2$ can be obtained by a \textquotedblleft
multiple\textquotedblright\ interpolation of exponents of inequalities of
the form
\begin{equation*}
\left( \sum\limits_{i_{1}=1}^{N} \left( \sum\limits_{i_{2}=1}^{N}\left(
\sum\limits_{i_{3}=1}^{N}\left\vert
T(e_{i_{^{1}}},e_{i_{2}},e_{i_{3}})\right\vert ^{q_{3}}\right) ^{\frac{q_{2}%
}{q_{3}}}\right) ^{\frac{q_{1}}{q_{2}}}\right) ^{\frac{1}{q_{1}}}\leq
\mathrm{C}\left\Vert T\right\Vert ,
\end{equation*}
with
\begin{equation*}
(q_{1},q_{2},q_{3})=(1,2,2),\left( 2,1,2\right) \text{ and }\left(
2,2,1\right)
\end{equation*}
or
\begin{equation*}
(q_{1},q_{2},q_{3})= \left( \frac{4}{3},\frac{4}{3},2 \right),\left( \frac{4%
}{3},2,\frac{4}{3}\right) \text{ and } \left( 2,\frac{4}{3},\frac{4}{3}%
\right)
\end{equation*}
and the last procedure provides quite better constants. This is a simple
illustration of the core of the new advances that lead to the results
reported in this expository paper.

The theory of $L_{p}$ spaces with mixed norms seems to have been created in
1961, \cite{bene}, including a H\"{o}lder inequality in this framework.
However, as we describe along this paper the full strength of this
inequality was overlooked and very recently important advances in different
fields of Mathematics and Physics were achieved with the help of this H\"{o}%
lder inequality (also called interpolative approach).

\section{H\"{o}lder's inequality revisited\label{sec3}}

Essentially, the simplest version of the H\"{o}lder inequality asserts that
if $1/p+1/q=1$ and $\left( a_{j}\right) \in \ell _{p},$ $\left( b_{j}\right)
\in \ell _{q}$ then $\left( a_{j}b_{j}\right) \in \ell _{1}.$ In this
section we present a variation of this result, which was apparently
overlooked in Functional Analysis (but not in PDEs) in the last decades.
This variant is a key result of a number of important recent advances in
Mathematical Analysis and Mathematical Physics that appeared in the last few
years.

The previous result may have been seen as a variant of the following general
H\"{o}lder's inequality presented in the classical paper \cite{bene} on
mixed norms in $L_{p}$ spaces. We shall now work with $L_p(\mathbb{N}) =
\ell_p$, since it is the case we are interested in. We need to recall some
useful multi-index notation: for a positive integer $m$ and a non-void
subset $D\subset \mathbb{N}$ we denote the set of multi-indices $\mathbf{i}%
=(i_{1},\dots ,i_{m})$, with each $i_{k}\in D$, by
\begin{equation*}
\mathcal{M}(m,D):=\left\{ \mathbf{i}=(i_{1},\dots ,i_{m})\in \mathbb{N}%
^{m};\ i_{k}\in D,\,k=1,\dots ,m\right\} =D^{m}.
\end{equation*}
We also denote
\begin{equation*}
\mathcal{M}(m,n):=\mathcal{M}(m,\{1, 2, \ldots, n\}).
\end{equation*}

For $\mathbf{p}=(p_{1},\dots ,p_{m})\in \lbrack 1, \infty )^{m}$, and a
Banach space $X$, let us consider the space
\begin{equation*}
\ell _{\mathbf{p}}(X):=\ell _{p_{1}}\left( \ell _{p_{2}}\left( \dots \left(
\ell _{p_{m}}(X)\right) \dots \right) \right) ,
\end{equation*}%
namely, a vector matrix $\left( x_{\mathbf{i}}\right) _{\mathbf{i}\in
\mathcal{M}(m,\mathbb{N})}\in \ell _{\mathbf{p}}(X)$ if, and only if,
\begin{equation*}
\left( \sum_{i_{1}=1}^{\infty }\left( \sum_{i_{2}=1}^{\infty }\left( \dots
\left( \sum_{i_{m-1}=1}^{\infty }\left( \sum_{i_{m}=1}^{\infty }\left\Vert
x_{\mathbf{i}}\right\Vert _{X}^{p_{m}}\right) ^{\frac{p_{m-1}}{p_{m}}%
}\right) ^{\frac{p_{m-2}}{p_{m-1}}}\dots \right) ^{\frac{p_{2}}{p_{3}}%
}\right) ^{\frac{p_{1}}{p_{2}}}\right) ^{\frac{1}{p_{1}}}< \infty .
\end{equation*}%
When $X=\mathbb{K}$, we just write $\ell _{\mathbf{p}}$ instead of $\ell _{%
\mathbf{p}}(\mathbb{K})$.

Also, we deal with the coordinatewise product of two scalar matrices $%
\mathbf{a}=\left( a_{\mathbf{i}}\right) _{\mathbf{i}\in \mathcal{M}(m,n)}$
and $\mathbf{b}=\left( b_{\mathbf{i}}\right) _{\mathbf{i}\in \mathcal{M}%
(m,n)}$, \emph{i.e.},
\begin{equation*}
\mathbf{a}\mathbf{b}:=\left( a_{\mathbf{i}}b_{\mathbf{i}}\right) _{\mathbf{i}%
\in \mathcal{M}(m,n)}.
\end{equation*}

The following result seems to be first observed by A. Benedek and R. Panzone
(see \cite{bene}):

\begin{theorem}[H\"{o}lder's inequality for mixed $\ell _{\mathbf{p}}$ spaces%
]
\label{GenHol} Let $\mathbf{r},\mathbf{q}(1),\dots ,\mathbf{q}(N)\in \lbrack
1, \infty ]^{m}$ such that
\begin{equation*}
\frac{1}{r_{j}}=\frac{1}{q_{j}(1)}+\cdots +\frac{1}{q_{j}(N)}, \quad j \in
\{1, 2, \ldots, m\}
\end{equation*}%
and let $\mathbf{a}_{k},\,k=1,\dots ,N$ be scalar $m$-square matrix. Then
\begin{equation*}
\left\Vert \prod_{k=1}^{N}\mathbf{a}_{k}\right\Vert _{\mathbf{r}}\leq
\prod_{k=1}^{N}\left\Vert \mathbf{a}_{k}\right\Vert _{\mathbf{q}(k)}.
\end{equation*}
\end{theorem}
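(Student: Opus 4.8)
The plan is to argue by induction on the number $m$ of indices, peeling off one coordinate at a time and reducing everything to the classical scalar H\"older inequality for several factors. Throughout I would adopt the conventions $1/\infty = 0$ and $\|\cdot\|_\infty = \sup|\cdot|$, so that an exponent $q_j(k) = \infty$ simply contributes a supremum and a factor that is carried along unchanged.

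For the base case $m=1$ the assertion is the classical generalized H\"older inequality: if $1/r = 1/q(1) + \cdots + 1/q(N)$ and $a_1,\dots,a_N$ are scalar sequences, then $\|a_1\cdots a_N\|_{r} \le \prod_{k=1}^{N}\|a_k\|_{q(k)}$. I would obtain this by a secondary induction on $N$ starting from the two-factor H\"older inequality recalled at the beginning: letting $s$ be defined by $1/s = 1/q(1) + \cdots + 1/q(N-1)$, one has $1/r = 1/s + 1/q(N)$, so H\"older gives $\|a_1\cdots a_N\|_{r} \le \|a_1\cdots a_{N-1}\|_{s}\,\|a_N\|_{q(N)}$, and the inductive hypothesis on the first $N-1$ factors finishes it.

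For the inductive step, assume the theorem for $m-1$ indices. I would split a multi-index as $\mathbf{i} = (i_1,\mathbf{i}')$ with $\mathbf{i}' = (i_2,\dots,i_m)$, and write $\mathbf{r} = (r_1,\mathbf{r}')$, $\mathbf{q}(k) = (q_1(k),\mathbf{q}'(k))$ accordingly. For each fixed $i_1$ consider the $(m-1)$-index slices $\mathbf{a}_k^{(i_1)} := (a_{k,(i_1,\mathbf{i}')})_{\mathbf{i}'}$; since the relation $1/r_j = \sum_{k}1/q_j(k)$ holds in particular for $j \in \{2,\dots,m\}$, the inductive hypothesis applies and yields
\[
\Bigl\|\prod_{k=1}^{N}\mathbf{a}_k^{(i_1)}\Bigr\|_{\mathbf{r}'} \ \le\ \prod_{k=1}^{N}\bigl\|\mathbf{a}_k^{(i_1)}\bigr\|_{\mathbf{q}'(k)} \ =:\ \prod_{k=1}^{N}b_k(i_1),
\]
a product of nonnegative scalars. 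By the very definition of the mixed norm (the outermost sum is over $i_1$ with exponent $r_1$) together with monotonicity of $t\mapsto t^{r_1}$ and of $t\mapsto t^{1/r_1}$,
\[
\Bigl\|\prod_{k=1}^{N}\mathbf{a}_k\Bigr\|_{\mathbf{r}} \ =\ \Bigl(\sum_{i_1}\Bigl\|\prod_{k=1}^{N}\mathbf{a}_k^{(i_1)}\Bigr\|_{\mathbf{r}'}^{\,r_1}\Bigr)^{1/r_1} \ \le\ \Bigl(\sum_{i_1}\Bigl(\prod_{k=1}^{N}b_k(i_1)\Bigr)^{r_1}\Bigr)^{1/r_1} \ =\ \bigl\|b_1\cdots b_N\bigr\|_{r_1},
\]
where $b_k = (b_k(i_1))_{i_1}$ is a scalar sequence. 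Since $1/r_1 = \sum_{k}1/q_1(k)$, applying the base case to $b_1,\dots,b_N$ gives $\|b_1\cdots b_N\|_{r_1} \le \prod_{k}\|b_k\|_{q_1(k)} = \prod_{k}\|\mathbf{a}_k\|_{\mathbf{q}(k)}$, which is exactly the asserted inequality; when $r_1 = \infty$ the outer sum over $i_1$ is replaced by a supremum and the same chain goes through verbatim.

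I do not expect a genuine obstacle here: the entire content lies in the multi-index bookkeeping — making sure the inductive hypothesis is invoked on the correct slices $\mathbf{a}_k^{(i_1)}$ with the correctly truncated exponent vectors $\mathbf{q}'(k)$, and checking that the defining relation among the $1/q_j(k)$ survives the truncation. The one point that needs a little care is the uniform treatment of the degenerate exponents $q_j(k) = \infty$; fixing the conventions $1/\infty = 0$ and $\|\cdot\|_\infty = \sup|\cdot|$ at the outset makes both the base case and the inductive step work without any change of argument.
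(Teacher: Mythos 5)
Your proof is correct. Note that the paper does not actually prove Theorem~\ref{GenHol}: it states the result and defers to Benedek--Panzone \cite{bene}, so there is no in-paper argument to compare against. What you give is the standard (and essentially the original) proof: peel off the outermost index, apply the inductive hypothesis to each slice $\mathbf{a}_k^{(i_1)}$, use monotonicity to pass the resulting scalar bound through the outer $\ell_{r_1}$-norm, and finish with the $N$-factor scalar H\"older inequality in the variable $i_1$; the identity $\|\mathbf{a}\|_{\mathbf{p}} = \bigl(\sum_{i_1}\|\mathbf{a}^{(i_1)}\|_{\mathbf{p}'}^{p_1}\bigr)^{1/p_1}$ that you rely on is exactly how the mixed norm is defined, and your conventions for infinite exponents are consistent (indeed $1/r_j=0$ forces every $q_j(k)=\infty$ in that coordinate, so the supremum case is even simpler than you suggest). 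Since the arrays are finite, there are no convergence issues, and the argument is complete. It is worth observing that your direct induction is more elementary than the interpolation machinery the paper invokes elsewhere, and it supplies a self-contained proof of the one ingredient the paper takes on faith.
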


Remember that, the previous inequality means the following:

\begin{quote}
{\small $\displaystyle
\left( \sum_{i_{1}=1}^{n}\left( \dots \left( \sum_{i_{m}=1}^{n} |a_{\mathbf{i%
}}^1 \cdot a_{\mathbf{i}}^2 \cdot \ldots \cdot a_{\mathbf{i}%
}^N|^{q_{m}}\right) ^{\frac{q_{m-1}}{q_{m}}}\dots \right) ^{\frac{q_{1}}{%
q_{2}}}\right) ^{\frac{1}{q_{1}}}$\newline
$\displaystyle \leq \prod_{k=1}^{N}\left[ \left( \sum_{i_{1}=1}^{n}\left(
\dots \left( \sum_{i_{m}=1}^{n}|a_{\mathbf{i}}|^{q_{m}(k)}\right) ^{\frac{%
q_{m-1}(k)}{q_{m}(k)}}\dots \right) ^{\frac{q_{1}(k)}{q_{2}(k)}}\right) ^{%
\frac{1}{q_{1}(k)}}\right], $ }
\end{quote}

Using the above result we are able to recover the interpolative inequality
from \cites{alb, bps, n, joedson} (Theorem \ref{gen.interp} below), that we
can also, in some sense, call H\"{o}lder's inequality for multiple
exponents. Under the point of view of interpolation theory it is not a
complicated result but, just in 2013, it began to be used in all its full
strength. Its applications (in different fields) are impressive, as we shall
illustrate in the remaining of the paper. Just before that, for a positive
real number $\theta $, let us define $\mathbf{a}^{\theta}:=\left( a_{\mathbf{%
i}}^{\theta }\right) _{\mathbf{i}\in \mathcal{M}(m,n)}$. It is
straightforward to see that
\begin{equation*}
\left\Vert \mathbf{a}^{\theta }\right\Vert _{\mathbf{q}/\theta }=\left\Vert
\mathbf{a}\right\Vert _{\mathbf{q}}^{\theta },
\end{equation*}%
where $\mathbf{q}/\theta :=\left( q_{1}/\theta ,\dots ,q_{m}/\theta \right) $%
.

\begin{theorem}[H\"{o}lder's inequality for multiple exponents
-interpolative approach-]
\label{gen.interp}  Let $m,n,N$ be positive integers and $\mathbf{q},\mathbf{%
q}(1),\ldots ,\mathbf{q}(N)\in \lbrack 1, \infty )^{m}$ be such that $\left(
\frac{1}{q_{1}},\dots ,\frac{1}{q_{m}}\right) $ belongs to the convex hull
of $\left( \frac{1}{q_{1}(k)},\dots ,\frac{1}{q_{m}(k)}\right) $, $%
k=1,\ldots ,N$. Then for all scalar matrix $\mathbf{a}=\left( a_{\mathbf{i}%
}\right) _{\mathbf{i}\in \mathcal{M}(m,n)}$,
\begin{equation*}
\left\Vert \mathbf{a}\right\Vert _{\mathbf{q}}\leq \prod_{k=1}^{N}\left\Vert
\mathbf{a}\right\Vert _{\mathbf{q}(k)}^{\theta _{k}},
\end{equation*}%
\emph{i.e.},

{\small $\displaystyle
\left( \sum_{i_{1}=1}^{n}\left( \dots \left( \sum_{i_{m}=1}^{n}|a_{\mathbf{i}%
}|^{q_{m}}\right) ^{\frac{q_{m-1}}{q_{m}}}\dots \right) ^{\frac{q_{1}}{q_{2}}%
}\right) ^{\frac{1}{q_{1}}}$\newline
$\displaystyle\leq \prod_{k=1}^{N}\left[ \left(\sum_{i_{1}=1}^{n}\left(
\dots \left( \sum_{i_{m}=1}^{n}|a_{\mathbf{i}}|^{q_{m}(k)}\right)^{\frac{%
q_{m-1}(k)}{q_{m}(k)}}\dots \right) ^{\frac{q_{1}(k)}{q_{2}(k)}}\right) ^{%
\frac{1}{q_{1}(k)}}\right] ^{\theta _{k}}, $ }

where $\theta _{k}$ are the coordinates of $\left( \frac{1}{q_{1}(k)},\dots ,%
\frac{1}{q_{m}(k)}\right) $ on the convex hull.
\end{theorem}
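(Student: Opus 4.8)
The plan is to derive Theorem~\ref{gen.interp} as a one-step consequence of Theorem~\ref{GenHol}; the only real work is translating the ``convex hull'' hypothesis into the ``sum of reciprocals'' hypothesis of the latter. By assumption there exist $\theta_{1},\dots ,\theta_{N}\ge 0$ with $\sum_{k=1}^{N}\theta_{k}=1$ and
\begin{equation*}
\frac{1}{q_{j}}=\sum_{k=1}^{N}\frac{\theta_{k}}{q_{j}(k)},\qquad j=1,\dots ,m.
\end{equation*}
Discarding the indices $k$ with $\theta_{k}=0$ (they will contribute only trivial factors at the end) I may assume $\theta_{k}\in (0,1]$ for every $k$.

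Next I would set $\mathbf{a}_{k}:=\mathbf{a}^{\theta_{k}}$ for each $k$ and introduce the rescaled exponent vectors $\mathbf{q}(k)/\theta_{k}=(q_{1}(k)/\theta_{k},\dots ,q_{m}(k)/\theta_{k})$. Since $q_{j}(k)\ge 1$ and $0<\theta_{k}\le 1$, each of these vectors lies in $[1,\infty )^{m}$, and the displayed identity reads exactly
\begin{equation*}
\frac{1}{q_{j}}=\sum_{k=1}^{N}\frac{1}{q_{j}(k)/\theta_{k}},\qquad j=1,\dots ,m,
\end{equation*}
which is precisely the hypothesis of Theorem~\ref{GenHol} with $\mathbf{r}=\mathbf{q}$ and the vectors $\mathbf{q}(k)$ there replaced by $\mathbf{q}(k)/\theta_{k}$. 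Applying that theorem to $\mathbf{a}_{1},\dots ,\mathbf{a}_{N}$ yields
\begin{equation*}
\left\Vert \prod_{k=1}^{N}\mathbf{a}^{\theta_{k}}\right\Vert _{\mathbf{q}}\le \prod_{k=1}^{N}\left\Vert \mathbf{a}^{\theta_{k}}\right\Vert _{\mathbf{q}(k)/\theta_{k}}.
\end{equation*}

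It remains to simplify both sides. On the left, the coordinatewise product telescopes: $\prod_{k=1}^{N}\mathbf{a}^{\theta_{k}}=\mathbf{a}^{\theta_{1}+\cdots +\theta_{N}}=\mathbf{a}$, so the left-hand side equals $\Vert \mathbf{a}\Vert _{\mathbf{q}}$. On the right, I would invoke the scaling identity $\Vert \mathbf{a}^{\theta }\Vert _{\mathbf{q}/\theta }=\Vert \mathbf{a}\Vert _{\mathbf{q}}^{\theta }$ recorded just before the statement, applied with $\theta =\theta_{k}$ and $\mathbf{q}$ replaced by $\mathbf{q}(k)$, which turns each factor into $\Vert \mathbf{a}\Vert _{\mathbf{q}(k)}^{\theta_{k}}$. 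Combining gives $\Vert \mathbf{a}\Vert _{\mathbf{q}}\le \prod_{k=1}^{N}\Vert \mathbf{a}\Vert _{\mathbf{q}(k)}^{\theta_{k}}$, which is the claim (the $k$'s discarded above have $\theta_{k}=0$ and correspond to factors equal to $1$, unless $\mathbf{a}=0$, in which case both sides vanish).

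There is no hard analytic step in this argument; all the content sits inside Theorem~\ref{GenHol}. The only points needing a little care are the bookkeeping at $\theta_{k}=0$, the check that the rescaled vectors $\mathbf{q}(k)/\theta_{k}$ are admissible (all entries $\ge 1$, which uses $\theta_{k}\le 1$), and keeping the coordinatewise power notation $\mathbf{a}^{\theta}$ consistent so that both the telescoping $\prod_{k}\mathbf{a}^{\theta_{k}}=\mathbf{a}$ and the scaling identity apply verbatim.
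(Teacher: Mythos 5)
Your proposal is correct and follows essentially the same route as the paper's first proof: rewrite $\tfrac{1}{q_j}=\sum_k \theta_k/q_j(k)$ as $\sum_k 1/(q_j(k)/\theta_k)$, factor $\mathbf{a}=\prod_k \mathbf{a}^{\theta_k}$, apply Theorem~\ref{GenHol}, and simplify via $\Vert\mathbf{a}^{\theta}\Vert_{\mathbf{q}/\theta}=\Vert\mathbf{a}\Vert_{\mathbf{q}}^{\theta}$. The extra care you take with the $\theta_k=0$ indices and the admissibility of $\mathbf{q}(k)/\theta_k$ is a reasonable refinement of the same argument.
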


\begin{proof}
For $j= 1, \ldots, m$ we have
\begin{equation*}
\frac{1}{q_j} = \frac{\theta_1}{q_j(1)}+ \ldots + \frac{\theta_N}{q_j(N)} =
\frac{1}{q_j(1)/\theta_1}+ \ldots + \frac{1}{q_j(N)/\theta_N}.
\end{equation*}

Since $\left\Vert \mathbf{a}^{\theta _{k}}\right\Vert _{\mathbf{q}(k)/\theta
_{k}}=\left\Vert \mathbf{a}\right\Vert _{\mathbf{q}(k)}^{\theta _{k}}$, by
the H\"{o}lder inequality for mixed $\ell _{\mathbf{p}}$ spaces we conlude
that
\begin{equation*}
\left\Vert \mathbf{a}\right\Vert _{\mathbf{q}}=\left\Vert \mathbf{a}^{\theta
_{1}+\cdots +\theta _{N}}\right\Vert _{\mathbf{q}}=\left\Vert \prod_{k=1}^{N}%
\mathbf{a}^{\theta _{k}}\right\Vert _{\mathbf{q}}\leq
\prod_{k=1}^{N}\left\Vert \mathbf{a}^{\theta _{k}}\right\Vert _{\mathbf{q}%
(k)/\theta _{k}}=\prod_{k=1}^{N}\left\Vert \mathbf{a}\right\Vert _{\mathbf{q}%
(k)}^{\theta _{k}}.
\end{equation*}
\end{proof}

For the sake of completeness of this article, we would also like to present
the following proof, which is based on interpolation.

\begin{proof}[(Interpolative Approach)]
We just follow the lines of \cite[Proposition 2.1]{alb}. Proceeding by
induction on $N$ and using that, for any Banach space $X$ and $\theta \in
[0,1]$,
\begin{equation*}
\ell _{\mathbf{r}}(X)=\left[ \ell _{\mathbf{p}}(X),\ell _{\mathbf{q}}(X)%
\right] _{\theta },
\end{equation*}%
with $\frac{1}{r_{i}}=\frac{\theta }{p_{i}}+\frac{1-\theta }{q_{i}}$, for $%
i=1,\dots ,m$ (see \cite{berg.lofst}). If
\begin{equation*}
\frac{1}{q_{i}}=\frac{\theta _{1}}{q_{i}(1)}+\cdots +\frac{\theta _{N}}{%
q_{i}(N)},
\end{equation*}
with $\sum_{k=1}^{N}\theta _{k}=1$ and each $\theta _{k}\in \lbrack 0,1]$,
then we also have
\begin{equation*}
\frac{1}{q_{i}}=\frac{\theta _{1}}{q_{i}(1)}+\frac{1-\theta _{1}}{p_{i}},
\end{equation*}%
setting
\begin{equation*}
\frac{1}{p_{i}}=\frac{\alpha _{2}}{q_{i}(2)}+\cdots +\frac{\alpha _{N}}{%
q_{i}(N)},\ \ \ \mbox{and}\ \alpha _{j}=\frac{\theta _{j}}{1-\theta _{1}},
\end{equation*}%
for $i=1,\dots ,m$ and $j=2,\dots ,N$. So $\alpha _{j}\in \lbrack 0,1]$ and $%
\sum_{j=2}^{N}\alpha _{j}=1$. Therefore, by the induction hypothesis, we
conclude that
\begin{equation*}
\left\Vert \mathbf{a}\right\Vert _{\mathbf{q}}\leq \left\Vert \mathbf{a}%
\right\Vert _{\mathbf{q}(1)}^{\theta _{1}}\cdot \left\Vert \mathbf{a}%
\right\Vert _{\mathbf{p}}^{1-\theta _{1}}\leq \left\Vert \mathbf{a}%
\right\Vert _{\mathbf{q}(1)}^{\theta _{1}}\cdot \left[ \prod_{j=2}^{N}\left%
\Vert \mathbf{a}\right\Vert _{\mathbf{q}(j)}^{\alpha _{j}}\right] ^{1-\theta
_{1}}=\prod_{k=1}^{N}\left\Vert \mathbf{a}\right\Vert _{\mathbf{q}%
(k)}^{\theta _{k}}.
\end{equation*}
\end{proof}

Combining the previous result with Minkowski's inequality we have a very
useful inequality (see \cite[Remark 2.2]{bps}):

\begin{corollary}
\label{cor33} \label{blei.interp} Let $m,n$ be positive integers, $1\leq
k\leq m$ and $1\leq s\leq q$. Then for all scalar matrix $\left( a_{\mathbf{i%
}}\right) _{\mathbf{i}\in \mathcal{M}(m,n)}$,
\begin{equation*}
\left( \sum_{\mathbf{i}\in \mathcal{M}(m,n)}\left\vert a_{\mathbf{i}%
}\right\vert ^{\rho }\right) ^{\frac{1}{\rho }}\leq \prod_{S\in \mathcal{P}%
_{k}\left( m\right) }\left( \sum_{\mathbf{i}_{S}}\left( \sum_{\mathbf{i}_{%
\widehat{S}}}\left\vert a_{\mathbf{i}}\right\vert ^{q}\right) ^{\frac{s}{q}%
}\right) ^{\frac{1}{s}\cdot \frac{1}{\binom{m}{k}}},
\end{equation*}%
where
\begin{equation*}
\rho :=\frac{msq}{kq+(m-k)s}
\end{equation*}
and $\mathcal{P}_{k}\left( m\right)$ stands for the set of subsets $S
\subseteq \{1, \ldots, m\}$ with card($S$)$=k$.
\end{corollary}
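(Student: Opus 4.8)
The plan is to deduce Corollary \ref{cor33} from \thmref{gen.interp} (H\"older's inequality for multiple exponents) by choosing, for each $k$-subset $S\subseteq\{1,\dots,m\}$, a suitable exponent vector $\mathbf{q}(S)$ and checking that the reciprocal vector $(1/q_1,\dots,1/q_m)$ of the target exponent $\rho$ (used in each coordinate) sits in the convex hull of the vectors $(1/q_1(S),\dots,1/q_m(S))$ with uniform weights $\theta_S=1/\binom{m}{k}$. First I would fix $S\in\mathcal P_k(m)$ and define $\mathbf{q}(S)$ to have entry $s$ in the coordinates belonging to $S$ and entry $q$ in the coordinates belonging to $\widehat S:=\{1,\dots,m\}\setminus S$. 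With this choice, the iterated-norm expression $\|\mathbf a\|_{\mathbf q(S)}$ is exactly
\[
\left(\sum_{\mathbf i_S}\Big(\sum_{\mathbf i_{\widehat S}}|a_{\mathbf i}|^{q}\Big)^{s/q}\right)^{1/s},
\]
\emph{provided} that the $S$-indices are summed on the outside and the $\widehat S$-indices on the inside; since $s\le q$ this is the ``right'' order, but one must either invoke Minkowski's inequality to rearrange the nested sums into this canonical order or simply note that \thmref{gen.interp} is stated for a fixed coordinate order and that the Minkowski step (already flagged in the statement ``Combining the previous result with Minkowski's inequality'') is what licenses writing the product over \emph{all} $S$, each with its own placement of the outer/inner blocks.

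Next I would verify the convex-hull hypothesis. For each coordinate $j\in\{1,\dots,m\}$, averaging the vectors $\big(1/q_1(S),\dots,1/q_m(S)\big)$ over the $\binom{m}{k}$ subsets $S$ gives, in coordinate $j$, the value $\frac1{\binom mk}\big[\#\{S:j\in S\}\cdot\frac1s+\#\{S:j\notin S\}\cdot\frac1q\big]$. Since exactly $\binom{m-1}{k-1}$ of the $k$-subsets contain a given index $j$ and $\binom{m-1}{k}$ do not, this average equals
\[
\frac{1}{\binom mk}\left(\binom{m-1}{k-1}\frac1s+\binom{m-1}{k}\frac1q\right)=\frac km\cdot\frac1s+\frac{m-k}{m}\cdot\frac1q=\frac{ks+(m-k)s\cdot\frac sq\cdot\frac qs}{\dots},
\]
which after clearing denominators is precisely $1/\rho=\frac{kq+(m-k)s}{msq}$; so $(1/\rho,\dots,1/\rho)$ is the uniform barycenter of the $\binom mk$ points, i.e.\ $\theta_S=1/\binom mk$ for every $S$, and the hypothesis of \thmref{gen.interp} holds. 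Applying that theorem with $\mathbf q=(\rho,\dots,\rho)$ then yields
\[
\left(\sum_{\mathbf i}|a_{\mathbf i}|^{\rho}\right)^{1/\rho}=\|\mathbf a\|_{\mathbf q}\le\prod_{S\in\mathcal P_k(m)}\|\mathbf a\|_{\mathbf q(S)}^{1/\binom mk},
\]
which is exactly the asserted inequality once each $\|\mathbf a\|_{\mathbf q(S)}$ is written out.

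The main obstacle I anticipate is \emph{not} the arithmetic of the hull coefficients (that is the routine computation sketched above) but the bookkeeping of nested-sum order: \thmref{gen.interp} compares iterated $\ell_{\mathbf q}$-norms that all use the same ordering $i_1,\dots,i_m$ of the summation variables, whereas the corollary's right-hand side wants, for each $S$, the $k$ variables indexed by $S$ on the outside and the $m-k$ variables of $\widehat S$ on the inside — a different grouping for each $S$. Reconciling these requires the standard fact (a consequence of Minkowski's integral inequality, valid because the inner exponent $q$ dominates the outer exponent $s$) that permuting a block of inner summation indices, or more precisely replacing $\|\cdot\|_{\mathbf q(S)}$ computed in the fixed order by the ``block'' version with $S$ outside, only \emph{increases} the value; equivalently, one first applies \thmref{gen.interp} in the fixed coordinate order and then uses Minkowski to pass to the symmetric block form. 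I would make this the one careful paragraph of the proof and treat everything else as a direct substitution into \thmref{gen.interp}.
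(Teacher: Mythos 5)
Your proposal is correct and is precisely the argument the paper intends: Corollary~\ref{cor33} is stated as ``combining'' Theorem~\ref{gen.interp} with Minkowski's inequality (Proposition~\ref{8776}), and your choice of the $\binom{m}{k}$ exponent vectors with entry $s$ on $S$ and $q$ on $\widehat{S}$, the barycenter computation $\frac{k}{m}\cdot\frac{1}{s}+\frac{m-k}{m}\cdot\frac{1}{q}=\frac{1}{\rho}$ with uniform weights $\theta_S=1/\binom{m}{k}$, and the Minkowski step to pass to the block form with the $S$-indices outside (legitimate since $s\leq q$) is exactly that combination. The only blemish is the garbled intermediate expression in your displayed computation of the coordinate average, but its endpoints are correct, so there is no gap.
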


The above corollary shows that Blei's inequality (see Corollary \ref{b}
below) is just a very particular case of a huge family of similar
inequalities. For our purposes, the crucial point is that the use of Blei's
inequality is far from being a good option to obtain good estimates for the
constants of the Bohnenblust--Hille and related inequalities. Just to
illustrate the strength of Theorem \ref{gen.interp} and Corollary \ref{cor33}%
, we present here a very simple proof (see \cite{bps}) of Blei's inequality.

\begin{corollary}[Blei's inequality - Defant, Popa, Schwarting approach,
\protect\cite{defa}]
\label{b} Let $A$ and $B$ be two finite non-void index sets. Let $%
(a_{ij})_{(i,j)\in A\times B}$ be a scalar matrix with positive entries, and
denote its columns by $\alpha _{j}=(a_{ij})_{i\in A}$ and its rows by $\beta
_{i}=(a_{ij})_{j\in B}.$ Then, for $q,s_{1},s_{2}\geq 1$ with $q>\max
(s_{1},s_{2})$ we have
\begin{equation*}
\left( \sum_{(i,j)\in A\times B}a_{ij}^{w(s_{1},s_{2})}\right) ^{\frac{1}{%
w(s_{1},s_{2})}}\leq \left( \sum_{i\in A}\left\Vert \beta _{i}\right\Vert
_{q}^{s_{1}}\right) ^{\frac{f(s_{1},s_{2})}{s_{1}}}\left( \sum_{j\in
B}\left\Vert \alpha _{j}\right\Vert _{q}^{s_{2}}\right) ^{\frac{%
f(s_{2},s_{1})}{s_{2}}},
\end{equation*}
with
\begin{align*}
w& :[1,q)^{2}\rightarrow \lbrack 0,\infty ),\text{ }w(x,y):=\frac{%
q^{2}(x+y)-2qxy}{q^{2}-xy}, \\
f& :[1,q)^{2}\rightarrow \lbrack 0,\infty ),\text{ }f(x,y):=\frac{q^{2}x-qxy%
}{q^{2}(x+y)-2qxy}.
\end{align*}
\end{corollary}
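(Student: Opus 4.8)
The plan is to derive Blei's inequality directly from H\"older's inequality for multiple exponents (Theorem~\ref{gen.interp}) in the case $m=2$, together with one application of Minkowski's inequality to reconcile the order in which the two partial sums are taken. Since zero entries affect none of the norms involved, after padding with zeros we may assume $A=B=\{1,\dots ,n\}$, so that everything takes place in $\ell_{\mathbf p}$ with $m=2$.

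First I would rewrite the three quantities in the statement as mixed $\ell_{\mathbf p}$-norms of $\mathbf a=(a_{ij})$ in the ordering $(i,j)$ of the indices. The left-hand side is $\left\Vert\mathbf a\right\Vert_{(w,w)}$; the first factor on the right is $\bigl(\sum_i\left\Vert\beta_i\right\Vert_q^{s_1}\bigr)^{1/s_1}=\left\Vert\mathbf a\right\Vert_{(s_1,q)}$; and the second factor $\bigl(\sum_j\left\Vert\alpha_j\right\Vert_q^{s_2}\bigr)^{1/s_2}$ is \emph{not} of this form, since there $j$ carries the outer sum --- it is instead $\left\Vert\mathbf a\right\Vert_{(s_2,q)}$ after transposing the roles of the two indices. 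This order mismatch is precisely why Theorem~\ref{gen.interp}, which works with a single fixed ordering, cannot be fed both factors at once, and it is the one place where a little extra work is needed. (When $s_1=s_2$ one could instead quote Corollary~\ref{cor33} with $m=2$, $k=1$, where the reordering is already built into the symmetric product over $\mathcal P_1(2)$.)

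Next, since $q>\max(s_1,s_2)$, the point $\bigl(\tfrac1w,\tfrac1w\bigr)$ lies strictly between $\bigl(\tfrac1{s_1},\tfrac1q\bigr)$ and $\bigl(\tfrac1q,\tfrac1{s_2}\bigr)$: solving $\bigl(\tfrac1w,\tfrac1w\bigr)=\theta\bigl(\tfrac1{s_1},\tfrac1q\bigr)+(1-\theta)\bigl(\tfrac1q,\tfrac1{s_2}\bigr)$ for $w$ and $\theta$ is a short computation that returns precisely $w=w(s_1,s_2)$, $\theta=f(s_1,s_2)$ and $1-\theta=f(s_2,s_1)$ (so in particular $f(s_1,s_2)+f(s_2,s_1)=1$). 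Feeding $\mathbf q=(w,w)$, $\mathbf q(1)=(s_1,q)$, $\mathbf q(2)=(q,s_2)$ into Theorem~\ref{gen.interp} then yields
\begin{equation*}
\left\Vert\mathbf a\right\Vert_{(w,w)}\le\left\Vert\mathbf a\right\Vert_{(s_1,q)}^{\,\theta}\left\Vert\mathbf a\right\Vert_{(q,s_2)}^{\,1-\theta},\qquad\text{where}\quad\left\Vert\mathbf a\right\Vert_{(q,s_2)}=\Bigl(\sum_i\bigl(\sum_j a_{ij}^{\,s_2}\bigr)^{q/s_2}\Bigr)^{1/q}.
\end{equation*}

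Finally I would apply Minkowski's inequality in the form used around \eqref{222} (see also \cite{garling}): since $s_2\le q$, moving the larger exponent $q$ from the outer sum to the inner sum only increases the value, so
\begin{equation*}
\Bigl(\sum_i\bigl(\sum_j a_{ij}^{\,s_2}\bigr)^{q/s_2}\Bigr)^{1/q}\le\Bigl(\sum_j\bigl(\sum_i a_{ij}^{\,q}\bigr)^{s_2/q}\Bigr)^{1/s_2}=\Bigl(\sum_j\left\Vert\alpha_j\right\Vert_q^{s_2}\Bigr)^{1/s_2}.
\end{equation*}
Raising this to the power $1-\theta\ge 0$ and inserting it in the previous display gives Blei's inequality exactly. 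The only genuine obstacles are bookkeeping ones: tracking which index sits on the outer sum so that Theorem~\ref{gen.interp} is fed the correct ordered mixed norms, and the elementary (but mildly tedious) verification that the forced exponents coincide with $w(s_1,s_2)$, $f(s_1,s_2)$ and $f(s_2,s_1)$; the hypothesis $q>\max(s_1,s_2)$ is used exactly to make $\theta\in(0,1)$ and to license the Minkowski step.
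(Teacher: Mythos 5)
Your proof is correct and follows essentially the same route as the paper: interpolate the mixed-norm exponents $(s_1,q)$ and $(q,s_2)$ with weights $f(s_1,s_2)$ and $f(s_2,s_1)$ via Theorem~\ref{gen.interp}, then use Proposition~\ref{8776} (the Minkowski corollary, licensed by $q>s_2$) to swap the order of summation in the second factor. Your explicit verification that the interpolated exponent is $w(s_1,s_2)$ and your remark on the index-ordering mismatch are just the details the paper leaves implicit.
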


\begin{proof}
Let us consider the exponents
\begin{equation*}
\left( q,s_{2}\right) ,\left( s_{1},q\right)
\end{equation*}%
and
\begin{equation*}
\left( \theta _{1},\theta _{2}\right) =\left(
f(s_{2},s_{1}),f(s_{1},s_{2})\right) .
\end{equation*}%
Note that $(w\left( s_{1},s_{2}\right) ,w\left( s_{1},s_{2}\right) )$ is
obtained by interpolating $\left( q,s_{2}\right) $ and $\left(
s_{1},q\right) $ with $\theta _{1},\theta _{2}$, respectively. Then, from
Theorem \ref{gen.interp}, we have
\begin{equation*}
\left( \sum_{(i,j)\in A\times B}a_{ij}^{w(s_{1},s_{2})}\right) ^{\frac{1}{%
w(s_{1},s_{2})}}\leq \left( \sum_{i\in A}\left\Vert \beta _{i}\right\Vert
_{q}^{s_{1}}\right) ^{\frac{f(s_{1},s_{2})}{s_{1}}}\left( \sum_{i\in
A}\left\Vert \beta _{i}\right\Vert _{s_{2}}^{q}\right) ^{\frac{f(s_{2},s_{1})%
}{q}}.
\end{equation*}%
Now, since $q>s_{2}$ we just need to use Propositon \ref{8776} to change the
order of the last sum.
\end{proof}

We invite the interest reader to compare the above proof with the proof
presented in \cite[pages 226-227]{defa}, in which the classical H\"{o}lder's
inequality is needed several times.

\section{Some useful inequalities\label{445}}

The main recent advances presented here are direct or indirect consequence
of the improvements obtained in the polynomial and multilinear
Bohnenblust--Hille inequalities (and these improvements were obtained by
using the theory of mixed $L_{p}$ spaces and the core of the results lie in
the variant of H\"{o}lder's inequality (Theorem \ref{gen.interp}). However
we also need three other important ingredients: the Khinchine inequality
(and its version for multiple sums), Kahane--Salem--Zygmund's inequality in
its polynomial and multilinear versions and a variant of Minkowski's
inequality. Before that, let us provide a brief account on polynomials and
multilinear operators, that shall be needed in the remaining of the article.

Polynomials in Banach spaces (at least for complex scalars) are of
fundamental importance in the theory of Infinite Dimensional Holomorphy (see
\cite{dineen, mujica}). In general the theory of polynomials and multilinear
operators between normed spaces has its importance in different areas of
Mathematics, from Number Theory, or Dirichlet series, to Functional Analysis.

In this section we recall the concepts of polynomials and multilinear
operators between Banach spaces and some ``folkloric results'', that will be
needed here.

If $E_{1},\ldots,E_{m}$, and $F$ are vector spaces, a $m$-linear operator $%
T:E_{1}\times \cdots \times E_{m}\rightarrow F$ is a map that is linear in
each coordinate separately. When $E_{1}=\cdots =E_{m}=E$ we say that $T$ is
symmetric if $T(x_{\sigma (1)}, \ldots ,x_{\sigma (m)})=T(x_{1}, \ldots
,x_{m})$ for all bijections $\sigma :\left\{ 1, \ldots ,m\right\}
\rightarrow \left\{ 1, \ldots ,m\right\} .$

If $E,F$ are vector spaces, a $m$-homogeneous polynomial is a map $%
P:E\rightarrow F$ such that%
\begin{equation*}
P(x)=T(x, \ldots ,x)
\end{equation*}%
for some $m$-linear operator $T:E\times \cdots \times E\rightarrow F.$
Continuity is defined is the obvious fashion.

Fixed $E,F,$ $E_{1}, \ldots ,E_{m}$, the spaces of continuous $m$%
-homogeneous polynomials from $E$ to $F$ are represented by $\mathcal{P}%
\left( ^{m}E;F\right) $ and the space of continuous multilinear operators
from $E_{1}\times \cdots \times E_{m}$ to $F$ is denoted by $\mathcal{L}%
\left( E_{1}, \ldots ,E_{m};F\right) .$ Both vector spaces are Banach spaces
when endowed with the $\sup $ norm in the unit ball of $B_{E}$ or in product
of the the unit balls $B_{E_{1}}\times \cdots \times B_{E_{m}}.$

The following characterizations of continuous polynomials are elementary
(analogous results holds for multilinear operators):

\begin{proposition}
\label{20}Let $E,F$ be vector spaces, $P\in \mathcal{P}\left(
^{m}E;F\right). $ The following assertions are equivalent:

\begin{itemize}
\item[(i)] $P\in \mathcal{P}\left( ^{m}E;F\right) $;

\item[(ii)] $P$ is continuous at zero;

\item[(iii)] There is a constant $\mathrm{M}>0$ such that $\left\Vert
P\left( x\right) \right\Vert \leq \mathrm{M}\left\Vert x\right\Vert ^{m}$,
for all $x\in E$;
\end{itemize}
\end{proposition}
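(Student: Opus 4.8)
The plan is to prove the cycle $(i)\Rightarrow(ii)\Rightarrow(iii)\Rightarrow(i)$, reading $(i)$ as ``$P$ is continuous on $E$'' and taking $E,F$ to be normed spaces, as in the discussion preceding the statement. The implication $(i)\Rightarrow(ii)$ is immediate, since continuity on all of $E$ includes continuity at the origin.

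For $(ii)\Rightarrow(iii)$, I would first record that $P(0)=0$: by $m$-homogeneity, $P(0)=P(0\cdot x)=0^{m}P(x)=0$. Continuity at $0$ then provides $\delta>0$ with $\Vert P(x)\Vert\le 1$ whenever $\Vert x\Vert\le\delta$. Given $x\neq 0$, the vector $\delta x/\Vert x\Vert$ has norm $\delta$, so applying $m$-homogeneity with the scalar $\Vert x\Vert/\delta$ gives $\Vert P(x)\Vert=(\Vert x\Vert/\delta)^{m}\Vert P(\delta x/\Vert x\Vert)\Vert\le\delta^{-m}\Vert x\Vert^{m}$. Hence $\mathrm{M}=\delta^{-m}$ works, the case $x=0$ being trivial.

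For $(iii)\Rightarrow(i)$, I would pass to a symmetric $m$-linear operator $T$ with $P(x)=T(x,\dots,x)$ (obtained by symmetrizing any $m$-linear representative of $P$) and invoke the polarization formula, which recovers $T$ as an average of the scalars $\varepsilon_{1}\cdots\varepsilon_{m}\,P(\varepsilon_{1}x_{1}+\cdots+\varepsilon_{m}x_{m})$ over the sign choices $\varepsilon_{j}=\pm 1$. Estimating each such value with $(iii)$ yields $\Vert T(x_{1},\dots,x_{m})\Vert\le C(\Vert x_{1}\Vert+\cdots+\Vert x_{m}\Vert)^{m}$ for all $(x_{1},\dots,x_{m})\in E^{m}$, and the homogeneity of $T$ in each variable then upgrades this to $\Vert T(x_{1},\dots,x_{m})\Vert\le C'\Vert x_{1}\Vert\cdots\Vert x_{m}\Vert$; in particular $T$ is continuous. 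Finally, the telescoping identity $P(x)-P(y)=\sum_{j=1}^{m}T(x,\dots,x,\,x-y,\,y,\dots,y)$, with $x-y$ in the $j$th slot, $x$ in the first $j-1$ slots and $y$ in the last $m-j$, together with the bound $\Vert T(x,\dots,x,x-y,y,\dots,y)\Vert\le C'\max(\Vert x\Vert,\Vert y\Vert)^{m-1}\Vert x-y\Vert$, shows that $P$ is continuous at every point of $E$.

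The only step requiring genuine care is $(iii)\Rightarrow(i)$: it relies on having the (symmetric) $m$-linear map associated with $P$ and on the polarization identity, and it uses throughout the homogeneity convention $P(\lambda x)=\lambda^{m}P(x)$ — the same convention underlying the scaling in $(ii)\Rightarrow(iii)$. The remaining implications are routine scaling arguments.
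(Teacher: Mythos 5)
Your proof is correct, and in fact the paper offers no proof of Proposition \ref{20} at all: it is recorded there as an elementary, folkloric fact, so your argument fills a gap rather than diverging from the text. Your reading of the statement is the intended one ($E,F$ normed, $(i)$ meaning continuity of $P$ on all of $E$), the implications $(i)\Rightarrow(ii)\Rightarrow(iii)$ are the routine scaling arguments, and the only delicate step, $(iii)\Rightarrow(i)$, is handled exactly as the paper's own toolkit suggests: symmetrize a representative, apply the polarization formula (Theorem \ref{FPFEV} with $x_{0}=0$) to get $\left\Vert T(x_{1},\dots ,x_{m})\right\Vert \leq \frac{m^{m}}{m!}\,\mathrm{M}\left\Vert x_{1}\right\Vert \cdots \left\Vert x_{m}\right\Vert$ — the same mechanism behind the polarization constant in \eqref{polar} — and conclude continuity of $P$ from the telescoping identity, which is valid since $T$ is symmetric.
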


The Polarization Formula relates polynomials and symmetric multilinear
operators in a very useful way. Its proof is a kind of consequence of the
Leibnitz formula and some \emph{combinatorial tricks} (see \cite{dineen,
mujica}).

\begin{theorem}[Polarization Formula]
\label{FPFEV}Let $E,F$ be linear spaces. If $T\in \mathcal{L}(^{m}E;F)$ is
symmetric then%
\begin{equation*}
T(x_{1}, \ldots ,x_{m})=\frac{1}{m!2^{m}}\underset{\varepsilon _{i}=\pm 1}{%
\sum }\varepsilon _{1}\cdots \varepsilon _{m}T(x_{0}+\varepsilon
_{1}x_{1}+\cdots +\varepsilon _{m}x_{m})^{m},
\end{equation*}%
for all $x_{0},x_{1},x_{2}, \ldots ,x_{m}\in E.$
\end{theorem}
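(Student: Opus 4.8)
The plan is to expand the right-hand side using multilinearity and the symmetry of $T$, and then average over the signs $\varepsilon_1,\dots,\varepsilon_m$ so that all unwanted terms cancel. Throughout, write $\varepsilon_0:=1$ and set $y_\varepsilon:=x_0+\varepsilon_1x_1+\cdots+\varepsilon_mx_m=\sum_{j=0}^m\varepsilon_j x_j$, so that the argument of $T$ on the right-hand side of the claimed identity is exactly $y_\varepsilon$, for each $\varepsilon=(\varepsilon_1,\dots,\varepsilon_m)\in\{-1,1\}^m$.

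First I would apply the multinomial expansion. Since $T$ is $m$-linear and symmetric,
\[
T(y_\varepsilon)^m=T(y_\varepsilon,\dots,y_\varepsilon)=\sum_{\substack{k_0+k_1+\cdots+k_m=m\\ k_j\ge 0}}\frac{m!}{k_0!\,k_1!\cdots k_m!}\,\varepsilon_1^{k_1}\cdots\varepsilon_m^{k_m}\,T\bigl(\underbrace{x_0,\dots}_{k_0},\underbrace{x_1,\dots}_{k_1},\dots,\underbrace{x_m,\dots}_{k_m}\bigr),
\]
where I used $\varepsilon_0=1$ to drop the power of $\varepsilon_0$.

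Next I would multiply by $\varepsilon_1\cdots\varepsilon_m$, sum over all $\varepsilon\in\{-1,1\}^m$, and interchange the (finite) sums. The sign-dependent part factors as $\prod_{i=1}^m\sum_{\varepsilon_i=\pm 1}\varepsilon_i^{\,k_i+1}$, and each inner sum equals $2$ if $k_i$ is odd and $0$ if $k_i$ is even. Hence a multi-index $(k_0,\dots,k_m)$ contributes only if $k_1,\dots,k_m$ are all odd --- in particular all $\ge 1$ --- in which case the sign-sum contributes precisely $2^m$. Together with the constraint $k_0+k_1+\cdots+k_m=m$, the unique surviving multi-index is $k_0=0$, $k_1=\cdots=k_m=1$; its multinomial coefficient is $m!$ and, by symmetry of $T$, its term is $T(x_1,\dots,x_m)$. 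Therefore
\[
\sum_{\varepsilon\in\{-1,1\}^m}\varepsilon_1\cdots\varepsilon_m\,T(y_\varepsilon)^m=m!\,2^m\,T(x_1,\dots,x_m),
\]
and dividing by $m!\,2^m$ yields the formula.

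The argument is essentially bookkeeping; the only slightly delicate point is the parity computation showing that the auxiliary vector $x_0$ is irrelevant: any term containing $x_0$ has $k_0\ge 1$, so $k_1+\cdots+k_m\le m-1$ and some $k_i$ must vanish (hence be even), which makes that term disappear under the sign-averaging. Note also that no continuity of $T$ is used --- the identity is purely algebraic, valid for any symmetric $m$-linear map between linear spaces.
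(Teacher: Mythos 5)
Your proof is correct and complete: the multinomial (Leibniz-type) expansion of $T(y_\varepsilon,\dots,y_\varepsilon)$ combined with the parity argument under sign-averaging is exactly the ``combinatorial trick'' the paper alludes to when it states the Polarization Formula without proof and refers to \cite{dineen, mujica}. In particular, your observation that the parameter $x_0$ drops out because any multi-index with $k_0\geq 1$ forces some $k_i$ to be even is the key bookkeeping point, and your remark that no continuity of $T$ is needed is accurate.
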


The following result is an immediate consequence of the Polarization Formula:

\begin{corollary}
\label{corpf}For each $m$-homogeneous polynomial there is a unique $m$%
-linear operator associated to it. In other words, if $P$ is a $m$%
-homogeneous polynomial, then there exists only one symmetric $m$-linear
operator $T$ (sometimes called \textit{polar} of $P$) such that
\begin{equation*}
P(x)=T(x, \ldots ,x)
\end{equation*}
for all $x.$
\end{corollary}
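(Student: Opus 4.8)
The plan is to derive everything from the Polarization Formula (Theorem \ref{FPFEV}), which already hands us a symmetric $m$-linear operator $T$ with $P(x)=T(x,\ldots,x)$ by the very definition of an $m$-homogeneous polynomial. So the content of Corollary \ref{corpf} is really the \emph{uniqueness} half, and that is where I would concentrate the argument. The existence of \emph{some} multilinear $T$ with $P(x)=T(x,\ldots,x)$ is immediate from the definition of $m$-homogeneous polynomial; the point is that one can always choose it symmetric and that the symmetric one is then forced.

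First I would observe that from any (not necessarily symmetric) $m$-linear $S$ with $P(x)=S(x,\ldots,x)$ one obtains a symmetric one by averaging: set
\begin{equation*}
T(x_{1},\ldots,x_{m}) := \frac{1}{m!}\sum_{\sigma} S(x_{\sigma(1)},\ldots,x_{\sigma(m)}),
\end{equation*}
the sum being over all bijections $\sigma$ of $\{1,\ldots,m\}$. This $T$ is $m$-linear, symmetric, and still satisfies $T(x,\ldots,x)=S(x,\ldots,x)=P(x)$, since every summand equals $S(x,\ldots,x)$ on the diagonal. Hence a symmetric polar always exists.

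Next, for uniqueness, suppose $T_{1}$ and $T_{2}$ are both symmetric $m$-linear operators with $T_{1}(x,\ldots,x)=T_{2}(x,\ldots,x)=P(x)$ for all $x\in E$. Apply the Polarization Formula (Theorem \ref{FPFEV}) to $T_{1}$ and to $T_{2}$, taking $x_{0}=0$: in both cases the right-hand side is
\begin{equation*}
\frac{1}{m!2^{m}}\sum_{\varepsilon_{i}=\pm 1}\varepsilon_{1}\cdots\varepsilon_{m}\,P(\varepsilon_{1}x_{1}+\cdots+\varepsilon_{m}x_{m}),
\end{equation*}
which depends only on $P$, not on which symmetric representative we started with. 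Therefore $T_{1}(x_{1},\ldots,x_{m})=T_{2}(x_{1},\ldots,x_{m})$ for all $x_{1},\ldots,x_{m}\in E$, i.e. $T_{1}=T_{2}$. This proves both the existence and the uniqueness of the polar, and the continuous case follows by restricting attention to continuous $P$ and noting that the averaging and polarization operations preserve continuity (using Proposition \ref{20}).

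The only genuine subtlety, and the step I expect to require the most care, is making sure the Polarization Formula is being invoked legitimately: it is a statement about a symmetric multilinear map, so to apply it to $T_{1}$ and $T_{2}$ one needs them symmetric — which is exactly the hypothesis in the uniqueness claim — and one must check that plugging $x_{0}=0$ genuinely isolates $T_{i}(x_{1},\ldots,x_{m})$ on the left while leaving a $P$-only expression on the right. Once that bookkeeping is done, there is nothing left to prove; the corollary is, as the text says, an immediate consequence of Theorem \ref{FPFEV}.
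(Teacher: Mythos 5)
Your proposal is correct and follows exactly the route the paper intends: the paper offers no written proof, merely noting the corollary is "an immediate consequence of the Polarization Formula," and your argument — existence by symmetrizing any representing multilinear map, uniqueness because the polarization identity (with $x_{0}=0$) expresses any symmetric representative purely in terms of $P$ — is the standard fleshing-out of that remark. No gaps.
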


In general, if $T$ is the symmetric $m$-linear operator associated to a $m$%
-homogeneous polynomial $P$ we have%
\begin{equation}
\left\Vert P\right\Vert \leq \frac{m^{m}}{m!}\left\Vert T\right\Vert,
\label{polar}
\end{equation}
where $\Vert P\Vert=\sup_{\left\Vert z\right\Vert =1}|P(z)|$. The constant $%
\frac{m^m}{m!}$ is usually called polarization constant.

If $P$ is a homogeneous polynomial of degree $m$ on ${\mathbb{K}}^{n}$ given
by
\begin{equation*}
P(x_{1},\ldots ,x_{n})=\sum_{|\alpha |=m}a_{\alpha }\mathbf{{x}^{\alpha },}
\end{equation*}%
and $L$ is the polar of $P$, then
\begin{equation}
L(e_{1}^{\alpha _{1}},\ldots ,e_{n}^{\alpha _{n}})=\frac{a_{\alpha }}{\binom{%
m}{\alpha }},  \label{800}
\end{equation}%
where $\{e_{1},\ldots ,e_{n}\}$ is the canonical basis of ${\mathbb{K}}^{n}$
and $e_{k}^{\alpha_{k}}$ stands for $e_{k}$ repeated $\alpha_{k}$ times, the
$\alpha_j$'s are non negative integers with $\sum_{j=1}^{n} \alpha_j = \alpha
$, and $\mathbf{{x}^{\alpha}} = x_{1}^{\alpha_1} \cdot \ldots \cdot
x_{n}^{\alpha_n}$.

\subsection{Khinchine's inequality}

The Khinchine inequality in its modern presentation has its roots in \cite%
{zyg}. Let $(\varepsilon _{i})_{i\geq 1}$ be a sequence of independent
Rademacher variables. Then, for any $p\in \lbrack 1,2]$, there exists a
constant $\mathrm{A}_{\mathbb{R},p}$ such that, for any sequence $(a_{i})$
of real numbers with finite support,
\begin{equation*}
\left( \sum_{i=1}^{ \infty }|a_{i}|^{2}\right) ^{1/2}\leq \mathrm{A}_{%
\mathbb{R},p}^{-1}\left( \int_{[0,1]^m }\left\vert \sum_{i=1}^{ \infty
}a_{i}\varepsilon _{i}(\omega )\right\vert ^{p}d\omega \right) ^{1/p}.
\end{equation*}%
For complex scalars it more useful (since it gives better constants) to use
the following version of Khinchine's inequality (called Khinchine's
inequality with Steinhaus variables): for any $p\in \lbrack 1,2]$, there
exists a constant $\mathrm{A}_{\mathbb{C},p}$ such that, for any sequence $%
(a_{i})$ of complex numbers with finite support
\begin{equation*}
\left( \sum_{i=1}^{ \infty }|a_{i}|^{2}\right) ^{1/2}\leq \mathrm{A}_{%
\mathbb{C},p}^{-1}\left( \int_{\mathbb{T}^{\infty }}\left\vert \sum_{i=1}^{
\infty }a_{i}z_{i}\right\vert ^{p}dz\right) ^{1/p},
\end{equation*}%
with $\mathbb{T}^{\infty }$ denoting the infinite polycircle, i.e.,

\begin{equation*}
\mathbb{T}^{\infty }=\left\{ z=\left( z_{i}\right) _{i\in \mathbb{N}}\in
\mathbb{C}^{\mathbb{N}}:\left\vert z_{i}\right\vert =1\text{ for all }i\in
\mathbb{N}\right\},
\end{equation*}%
and $dz$ denoting the standard Lebesgue probability measure on $\mathbb{T}%
^{\infty }$. The best constants $\mathrm{A}_{\mathbb{R},p}$ and $\mathrm{A}_{%
\mathbb{C},p}$ were obtained by Haagerup and K\"{o}nig, respectively (see
\cite{Haa} and \cite{KKw}). More precisely,

\begin{itemize}
\item $\mathrm{A}_{\mathbb{R},p}=\frac{1}{\sqrt{2}}\left( \frac{\Gamma
\left( \frac{1+p}{2}\right) }{\sqrt{\pi }}\right) ^{1/p}$ if $p>p_{0}\approx
1.8474$;

\item $\mathrm{A}_{\mathbb{R},p}=2^{\frac{1}{2}-\frac{1}{p}}$ if $p<p_{0}$;

\item $\mathrm{A}_{\mathbb{C},p}=\Gamma \left( \frac{p+2}{2}\right) ^{1/p}$
if $p\in \lbrack 1,2]$.
\end{itemize}

The (apparently) \emph{strange} value $p_{0}\approx 1.8474$ is, to be
precise, the unique number $p_{0}\in (1,2)$ with
\begin{equation*}
\Gamma \left( \frac{p_{0}+1}{2}\right) =\frac{\sqrt{\pi }}{2}.
\end{equation*}%
The notation $\mathrm{A}_{\mathbb{K},p}$ will be kept along this paper.

Using Fubini's theorem and Minkowski's inequality (see, for instance, \cite[%
Lemma 2.2]{defa} for the real case and \cite[Theorem 2.2]{ddss} for the
complex case), these inequalities have a multilinear version: for any $%
n,m\geq 1$, for any family $(a_{\mathbf{i}})_{\mathbf{i}\in \mathbb{N}^{m}}$
of real (resp. complex) numbers with finite support,
\begin{equation*}
\left( \sum_{\mathbf{i}\in \mathbb{N}^{m}}|a_{\mathbf{i}}|^{2}\right)
^{1/2}\leq \mathrm{A}_{\mathbb{R},p}^{-m}\left( \int_{[0,1]^m }\left\vert
\sum_{\mathbf{i}\in \mathbb{N}^{m}}a_{\mathbf{i}}\varepsilon
_{i_{1}}^{(1)}(\omega_1 )\dots \varepsilon _{i_{m}}^{(m)}(\omega_m
)\right\vert ^{p}d\omega_1 \cdots d\omega_m \right) ^{1/p}
\end{equation*}%
where $(\varepsilon _{i}^{(1)}),\dots ,(\varepsilon _{i}^{(m)})$ are
sequences of independent Rademacher variables (resp.
\begin{equation*}
\left( \sum_{\mathbf{i}\in \mathbb{N}^{m}}|a_{\mathbf{i}}|^{2}\right)
^{1/2}\leq \mathrm{A}_{\mathbb{C},p}^{-m}\left( \int_{(\mathbb{T}^{\infty
})^{m}}\left\vert \sum_{\mathbf{i}\in \mathbb{N}^{m}}a_{\mathbf{i}%
}z_{i_{1}}^{(1)}\dots z_{i_{m}}^{(m)}\right\vert ^{p}dz^{(1)}\dots
dz^{(m)}\right) ^{1/p},
\end{equation*}%
in the complex case).

\subsection{Kahane--Salem--Zygmund's inequality}

The essence of the Kahane--Salem--Zygmund inequalities, as we describe
below, probably appeared for the first time in \cite{Kahane}, but our
approach follows the lines of Boas' paper \cite{korea}. Paraphrasing Boas,
the Kahane--Salem--Zygmund inequalities use probabilistic methods to
construct a homogeneous polynomial (or multilinear operator) with a
relatively small supremum norm but relatively large majorant function. Both
the multilinear and polynomial versions are needed for our goals.

\begin{theorem}[Kahane--Salem--Zygmund's inequality - Multilinear version,
\protect\cite{korea}]
\label{novv1} Let $m,n$ be positive integers. There exists a $m$-linear map $%
\displaystyle T_{m,n}:\ell _{\infty }^{n}\times \cdots \times \ell
_{\infty}^{n}\rightarrow \mathbb{K}$ of the form
\begin{equation*}
\displaystyle T_{m,n}(z^{(1)}, \ldots, z^{(m)})=\sum_{i_{1}, \ldots, i_{m} =
1}^{n}\pm z_{i_{1}}^{(1)} \ldots z_{i_{m}}^{(m)}
\end{equation*}
such that
\begin{equation*}
\left\Vert T_{m,n}\right\Vert \leq \sqrt{32\log \left( 6m\right) }\times n^{%
\frac{m+1}{2}}\times \sqrt{n!}.
\end{equation*}
\end{theorem}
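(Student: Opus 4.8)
The plan is the classical probabilistic (``random signs'') method. On a suitable probability space, take independent Bernoulli variables $(\varepsilon_{\mathbf i})_{\mathbf i\in\mathcal{M}(m,n)}$, each equal to $+1$ or $-1$ with probability $\tfrac12$, and form the random $m$-linear map
\begin{equation*}
T(z^{(1)},\ldots,z^{(m)})=\sum_{\mathbf i\in\mathcal{M}(m,n)}\varepsilon_{\mathbf i}\,z_{i_1}^{(1)}\cdots z_{i_m}^{(m)},
\end{equation*}
which already has the shape demanded by the statement. Thus it suffices to prove that $\|T\|$ exceeds the asserted quantity with probability strictly less than $1$; any realization of the signs lying in the complementary event then provides the desired $T_{m,n}$.

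The first genuine step is to discretize the supremum defining $\|T\|$. Since $T$ is linear in each variable separately, $\sup|T|$ over the product of closed unit balls of $\ell_\infty^n$ is attained on the product of distinguished boundaries: the $2^n$ vertices $\{-1,+1\}^n$ in the real case, the torus $\mathbb{T}^n$ in the complex case. In the real case only $2^{mn}$ tuples $\mathbf z=(z^{(1)},\ldots,z^{(m)})$ remain to be controlled. In the complex case I would replace each $\mathbb{T}^n$ by a finite $\delta$-net and absorb the error via the elementary Lipschitz bound
\begin{equation*}
\big|T(z^{(1)},\ldots,z^{(m)})-T(w^{(1)},\ldots,w^{(m)})\big|\le m\,n^{m}\max_{1\le k\le m}\|z^{(k)}-w^{(k)}\|_\infty,
\end{equation*}
which follows by telescoping in the $m$ variables from the crude estimate $|T(u^{(1)},\ldots,u^{(m)})|\le\prod_{k}\|u^{(k)}\|_1\le n^m\prod_k\|u^{(k)}\|_\infty$; taking $\delta$ to be a suitable negative power of $n$ renders this error negligible against $n^{(m+1)/2}$.

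The probabilistic core is a one-point concentration estimate. For a fixed admissible tuple $\mathbf z$, the scalar $T(z^{(1)},\ldots,z^{(m)})=\sum_{\mathbf i}\varepsilon_{\mathbf i}z_{i_1}^{(1)}\cdots z_{i_m}^{(m)}$ is a Rademacher sum (in the complex case applied to real and imaginary parts separately) whose coefficient vector has Euclidean norm exactly $n^{m/2}$; hence the standard subgaussian tail inequality yields
\begin{equation*}
\mathbb{P}\big(|T(z^{(1)},\ldots,z^{(m)})|\ge t\big)\le 4\exp\!\left(-\frac{t^{2}}{4\,n^{m}}\right).
\end{equation*}
A union bound over all points of the net then shows that $\|T\|\le t$ with positive probability as soon as $t$ surpasses a constant multiple of $\sqrt{\log(\text{cardinality of the net})}\cdot n^{m/2}$; bounding that cardinality generously (as in \cite{korea}) converts this into precisely the threshold $\sqrt{32\log(6m)}\times n^{(m+1)/2}\times\sqrt{n!}$, which finishes the proof. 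For real scalars no net is needed and one even obtains the cleaner threshold of order $\sqrt{m}\,n^{(m+1)/2}$; only the power $\tfrac{m+1}{2}$ matters for the applications, since it is exactly what makes the exponents in the Bohnenblust--Hille and Hardy--Littlewood inequalities optimal.

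The step I expect to be most delicate is the complex case: one must take the mesh $\delta$ fine enough that the Lipschitz error is swamped, while keeping enough control of the net's cardinality for the union bound to close, and one must track the numerical constants carefully enough to land on the stated clean values $\sqrt{32\log(6m)}$ and $\sqrt{n!}$. The real-scalar case is, by comparison, entirely elementary once the subgaussian estimate and the union bound over the $2^{mn}$ vertices are in hand.
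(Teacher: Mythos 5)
The paper gives no proof of Theorem \ref{novv1} (it is quoted from Boas \cite{korea}), and your overall strategy --- random signs, a pointwise subgaussian tail bound, discretization of the sup, and a union bound --- is exactly the intended one; your real-scalar case is complete. One remark on the statement itself: the factor $\sqrt{n!}$ is a misprint for $\sqrt{m!}$, which is what Boas proves and what the paper actually uses (note the $\sqrt{2}=\sqrt{2!}$ in the application to Theorem \ref{Littlewood43}). Read literally with $\sqrt{n!}$ the bound is so weak that your argument closes trivially, but it is then useless for every application; the version worth proving is the one with $\sqrt{m!}$, and it is against that version that your sketch must be measured.

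Measured that way, the complex case has a genuine quantitative gap. Your Lipschitz constant for $T$ on the polytorus is $mn^{m}$, so to make the oscillation over a mesh-$\delta$ net negligible against $n^{(m+1)/2}$ you must take $\delta\lesssim n^{(1-m)/2}/m$, i.e.\ polynomially small in $n$. A $\delta$-net of $(\mathbb{T}^{n})^{m}$ then has cardinality of order $(1/\delta)^{mn}$, so the union bound only closes once $t^{2}/(4n^{m})\gtrsim mn\log(1/\delta)\sim\tfrac12 m(m-1)\,n\log n$, forcing $t\gtrsim m\,n^{(m+1)/2}\sqrt{\log n}$. That extra $\sqrt{\log n}$ cannot be absorbed into any factor depending only on $m$, so the argument as written does not reach $\sqrt{32\log(6m)}\,\sqrt{m!}\,n^{(m+1)/2}$. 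The missing idea is the Salem--Zygmund/Bernstein device: on the torus, $T$ is a trigonometric polynomial of degree at most $1$ (at most $m$ in the polynomial version) in each of the $mn$ angular variables, so Bernstein's inequality gives $\|\partial_{\theta_j}T\|_{\infty}\le m\|T\|_{\infty}$, and comparing $\sup|T|$ with the value at the nearest point of a net of mesh $c/m$ \emph{per circle} yields $\|T\|_{\infty}\le 2\max_{\mathrm{net}}|T|$ with a net of cardinality only $(Cm)^{mn}$. Then $\log(\mathrm{cardinality})\lesssim mn\log(Cm)$ and the union bound gives $t\lesssim\sqrt{m\log(Cm)}\,n^{(m+1)/2}$, which fits under $\sqrt{32\log(6m)}\sqrt{m!}\,n^{(m+1)/2}$. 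In short, the oscillation over the net must be controlled multiplicatively relative to $\|T\|_{\infty}$ itself (via Bernstein), not additively against $n^{(m+1)/2}$ via the crude bound $|T|\le n^{m}$; the latter makes the net too fine and costs the fatal $\sqrt{\log n}$.
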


The original version of the Kahane--Salem--Zygmund appears in the framework
of complex scalars but it is simple to verify that the same result (with the
same constants) holds for real scalars. The folowing result is corollary of
the previous, now for polynomials, and it will also be important for our
aims.

\begin{theorem}[Kahane--Salem--Zygmund's inequality - Polynomial version,
\protect\cite{korea}]
\label{novv2}Let $m,n$ be positive integers. Then there exists a $m$%
-homogeneous polynomial $P:\ell _{\infty }^{n}\rightarrow \mathbb{K}$ of the
form
\begin{equation*}
P_{m,n}(\mathbf{z})={\textstyle}\sum_{|\alpha |=d}\pm \binom{m}{\alpha }%
\mathbf{z}^{\alpha }
\end{equation*}%
such that
\begin{equation*}
\left\Vert P_{m,n}\right\Vert \leq \sqrt{32\log \left( 6m\right) }\times n^{%
\frac{m+1}{2}}\times \sqrt{n!}.
\end{equation*}
\end{theorem}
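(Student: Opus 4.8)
The plan is to derive the polynomial Kahane--Salem--Zygmund inequality (Theorem~\ref{novv2}) directly from the multilinear one (Theorem~\ref{novv1}), exactly as the phrase ``corollary of the previous'' suggests. Start with the $m$-linear map $T_{m,n}(z^{(1)},\dots,z^{(m)})=\sum_{i_{1},\dots,i_{m}=1}^{n}\varepsilon_{i_{1}\dots i_{m}}z^{(1)}_{i_{1}}\cdots z^{(m)}_{i_{m}}$ provided by Theorem~\ref{novv1}, whose signs $\varepsilon_{i_{1}\dots i_{m}}=\pm 1$ satisfy $\|T_{m,n}\|\le \sqrt{32\log(6m)}\,n^{(m+1)/2}\sqrt{n!}$. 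The naive idea ``set $P(z):=T_{m,n}(z,\dots,z)$'' is not quite what we want, because the resulting polynomial need not have coefficients of the form $\pm\binom{m}{\alpha}$; instead one symmetrizes. Define the symmetric $m$-linear form $\widetilde{T}(z^{(1)},\dots,z^{(m)}):=\frac{1}{m!}\sum_{\sigma\in S_{m}}T_{m,n}(z^{(\sigma(1))},\dots,z^{(\sigma(m))})$, and put $P_{m,n}(z):=\widetilde{T}(z,\dots,z)=T_{m,n}(z,\dots,z)$. Now $\|\widetilde{T}\|\le\|T_{m,n}\|$ since $\widetilde{T}$ is an average of maps each of norm $\|T_{m,n}\|$ (composition with a coordinate permutation is an isometry of $\ell_\infty^n\times\cdots\times\ell_\infty^n$), and $\|P_{m,n}\|\le\|\widetilde T\|$ trivially, so $\|P_{m,n}\|\le\|T_{m,n}\|\le\sqrt{32\log(6m)}\,n^{(m+1)/2}\sqrt{n!}$.

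It remains to check the claimed \emph{form} of $P_{m,n}$. Expanding $T_{m,n}(z,\dots,z)=\sum_{i_{1},\dots,i_{m}=1}^{n}\varepsilon_{i_{1}\dots i_{m}}z_{i_{1}}\cdots z_{i_{m}}$ and grouping the multi-indices $(i_{1},\dots,i_{m})$ by the monomial $\mathbf{z}^{\alpha}$ they produce (where $\alpha_{k}$ counts how many of the $i_{j}$ equal $k$), the coefficient of $\mathbf{z}^{\alpha}$ is $\sum_{(i_{1},\dots,i_{m})\in\pi^{-1}(\alpha)}\varepsilon_{i_{1}\dots i_{m}}$, a sum of $\binom{m}{\alpha}=\frac{m!}{\alpha_{1}!\cdots\alpha_{n}!}$ terms each $\pm 1$. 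This is \emph{not} automatically $\pm\binom{m}{\alpha}$ for an arbitrary choice of signs; the correct reading of Theorem~\ref{novv2} is that one is \emph{free to re-choose} the signs in Theorem~\ref{novv1}. So the cleanest route is: before symmetrizing, use a version of Theorem~\ref{novv1} in which the signs $\varepsilon_{i_{1}\dots i_{m}}=\varepsilon_{\{i_{1},\dots,i_{m}\}}$ depend only on the \emph{multiset} $\{i_{1},\dots,i_{m}\}$ (equivalently on $\alpha$). One obtains this by running the standard random-signs argument behind Theorem~\ref{novv1} with i.i.d.\ signs $(\varepsilon_{\alpha})_{|\alpha|=m}$ and setting $T_{m,n}(z^{(1)},\dots,z^{(m)}):=\sum_{|\alpha|=m}\varepsilon_{\alpha}\sum_{(i_{1},\dots,i_{m})\in\pi^{-1}(\alpha)}z^{(1)}_{i_{1}}\cdots z^{(m)}_{i_{m}}$; the same Bernstein/Chernoff-type tail bound used in \cite{korea} applies (the number of monomials $\binom{n+m-1}{m}$ is still dominated appropriately), giving the very same bound $\sqrt{32\log(6m)}\,n^{(m+1)/2}\sqrt{n!}$. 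With this choice the coefficient of $\mathbf{z}^{\alpha}$ in $P_{m,n}(z)=T_{m,n}(z,\dots,z)$ is exactly $\varepsilon_{\alpha}\binom{m}{\alpha}=\pm\binom{m}{\alpha}$, which is the asserted form.

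Putting the pieces together: pick the signed multilinear form $T_{m,n}$ as above with $\|T_{m,n}\|\le\sqrt{32\log(6m)}\,n^{(m+1)/2}\sqrt{n!}$; set $P_{m,n}(\mathbf{z}):=T_{m,n}(\mathbf{z},\dots,\mathbf{z})=\sum_{|\alpha|=m}\pm\binom{m}{\alpha}\mathbf{z}^{\alpha}$; and observe $\|P_{m,n}\|=\sup_{\|z\|_\infty=1}|T_{m,n}(z,\dots,z)|\le\|T_{m,n}\|$. This yields $\|P_{m,n}\|\le\sqrt{32\log(6m)}\,n^{(m+1)/2}\sqrt{n!}$, completing the proof. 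The main obstacle is the bookkeeping in the previous paragraph: one must make sure that the probabilistic estimate underlying Theorem~\ref{novv1} still goes through when the random signs are indexed by monomials $\alpha$ rather than by ordered tuples $(i_{1},\dots,i_{m})$ --- this is where a small amount of genuine work (re-examining the union bound over the net of extreme points of the polydisc, and the count of monomials) is needed, though in \cite{korea} this polynomial version is in fact handled simultaneously with the multilinear one.
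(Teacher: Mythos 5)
The paper itself offers no proof of Theorem~\ref{novv2}: it is merely asserted to be a ``corollary of the previous'' (Theorem~\ref{novv1}) with a citation to \cite{korea}, so there is no argument in the text to compare yours against. Your diagnosis of why it is \emph{not} an immediate corollary is exactly right: restricting $T_{m,n}$ to the diagonal gives a polynomial whose coefficient of $\mathbf{z}^{\alpha}$ is a sum of $\binom{m}{\alpha}$ independent signs, not $\pm\binom{m}{\alpha}$, so one must instead run the random construction with signs $\varepsilon_{\alpha}$ indexed by multi-indices. This is in fact how Boas proceeds in \cite{korea} (there the polynomial version is the primary object and the multilinear statement is extracted from it, rather than the other way around), so your route is the standard one. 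The incidental symmetrization $\widetilde{T}$ in your first paragraph is redundant once you adopt the multiset-indexed signs: the form $\sum_{|\alpha|=m}\varepsilon_{\alpha}\sum_{\mathbf{i}\in\pi^{-1}(\alpha)}z^{(1)}_{i_1}\cdots z^{(m)}_{i_m}$ is already symmetric.

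The one place where you wave your hands is precisely where the content of the theorem lives: the claim that ``the same Bernstein/Chernoff-type tail bound applies, giving the very same bound.'' It is not the same bound for free. With independent signs indexed by ordered tuples the relevant sub-Gaussian variance proxy is $\sum_{\mathbf{i}}|z^{(1)}_{i_1}\cdots z^{(m)}_{i_m}|^{2}\le n^{m}$, whereas with multiset-indexed signs it becomes
\begin{equation*}
\sum_{|\alpha|=m}\Bigl|\sum_{\mathbf{i}\in\pi^{-1}(\alpha)}z^{(1)}_{i_1}\cdots z^{(m)}_{i_m}\Bigr|^{2}\le\sum_{|\alpha|=m}\binom{m}{\alpha}^{2}\le m!\sum_{|\alpha|=m}\binom{m}{\alpha}=m!\,n^{m},
\end{equation*}
an inflation by a factor of up to $m!$. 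This is exactly what the factorial factor $\sqrt{m!}$ in the stated bound absorbs (the $\sqrt{n!}$ in the paper's statement of both theorems is almost surely a typo for $\sqrt{m!}$; Boas's bound is $\sqrt{32\log(6m)}\,n^{(m+1)/2}\sqrt{m!}$). So your argument does close, but you should say why: the union bound over the net of the polydisc and the count $\binom{n+m-1}{m}\le n^{m}$ of monomials are unchanged, and the only new ingredient is the extra $m!$ in the variance, which the stated constant already accommodates. As written, deferring this to ``a small amount of genuine work'' leaves the quantitative heart of the theorem unproved.
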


\subsection{A corollary to Minkowski's inequality}

Minkowski's inequality is a very well-known result that helps to prove that $%
L_{p}$ spaces are Banach spaces: it is the triangle inequality for $L_{p}$
spaces. We need a somewhat well known result, which is a corollary of one of
the many versions of Minkowski's inequality, whose proof can be found, for
instance, in \cite{garling}.

\begin{proposition}[Corollary to Minkowski's inequality]
\label{8776}For any $0<p\leq q< \infty $ and for any matrix of complex
numbers $(c_{ij})_{i,j=1}^{\infty}$,
\begin{equation*}
\left( \sum_{i=1}^{\infty}\left( \sum_{j=1}^{\infty}|c_{ij}|^{p}\right)
^{q/p}\right) ^{1/q}\leq \left( \sum_{j=1}^{\infty}\left(
\sum_{i=1}^{\infty}|c_{ij}|^{q}\right) ^{p/q}\right) ^{1/p}.
\end{equation*}
\end{proposition}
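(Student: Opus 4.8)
The plan is to prove the inequality
\[
\left( \sum_{i}\left( \sum_{j}|c_{ij}|^{p}\right)^{q/p}\right)^{1/q}
\leq \left( \sum_{j}\left( \sum_{i}|c_{ij}|^{q}\right)^{p/q}\right)^{1/p}
\]
for $0<p\le q<\infty$ by reducing it to the triangle inequality in a normed space, i.e.\ to ordinary Minkowski's inequality. First I would set $r:=q/p\ge 1$ and introduce the nonnegative quantities $b_{ij}:=|c_{ij}|^{p}$. Then the claimed inequality is equivalent (after raising both sides to the power $p$, which is monotone) to
\[
\left( \sum_{i}\left( \sum_{j} b_{ij}\right)^{r}\right)^{1/r}
\leq \sum_{j}\left( \sum_{i} b_{ij}^{\,r}\right)^{1/r},
\]
so the whole statement collapses to the single assertion: for $r\ge 1$ and nonnegative $b_{ij}$, the $\ell_r$-norm (in the index $i$) of a sum over $j$ is at most the sum over $j$ of the $\ell_r$-norms. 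That is exactly the triangle inequality $\bigl\|\sum_j v^{(j)}\bigr\|_{r}\le \sum_j \bigl\|v^{(j)}\bigr\|_{r}$ applied to the vectors $v^{(j)}:=(b_{ij})_{i}\in \ell_r$, which is classical Minkowski.

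The steps, in order: (1) handle the trivial case $p=q$ (both sides are literally equal, so $r=1$ and there is nothing to prove); (2) assume $p<q$, so $r=q/p>1$; (3) perform the substitution $b_{ij}=|c_{ij}|^{p}$ and the exponentiation reduction described above, being careful that $x\mapsto x^{p}$ and $x\mapsto x^{1/p}$ are increasing bijections of $[0,\infty)$ so the inequality is genuinely equivalent to the $\ell_r$ version; (4) invoke the (possibly infinite-dimensional) Minkowski inequality for $\ell_r$, $r\ge 1$, to conclude; (5) address convergence bookkeeping — if the right-hand side of the target inequality is $+\infty$ there is nothing to prove, and otherwise each inner sum $\sum_j b_{ij}$ is finite and the partial-sum version of Minkowski passes to the limit by monotone convergence, so no integrability subtleties arise. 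Alternatively, one could cite directly the statement of Minkowski's integral inequality for the counting measure on the $j$-index, which is the form in which it appears in Garling's book referenced in the excerpt; either route is a one-line appeal once the reduction is in place.

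The only point requiring genuine care — and hence the ``main obstacle'', though it is a mild one — is making sure the reduction to Minkowski is logically airtight when infinitely many entries are involved: one must check that the monotone-convergence passage to the limit is valid and that the degenerate cases (empty index sets, a row or column that is identically zero, or a divergent right-hand side) are covered. None of this is hard, but it is the place where a careless proof could slip. I expect the cleanest writeup to be simply: reduce via $b_{ij}=|c_{ij}|^p$ and exponent $q/p$, then quote Minkowski's inequality in $\ell_{q/p}$ (or its integral form from \cite{garling}), with a sentence noting that the infinite case follows from the finite case by monotone convergence.
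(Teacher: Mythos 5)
Your reduction is correct and is exactly the route the paper intends: it states the result as a corollary of Minkowski's inequality and simply cites \cite{garling} for the proof, and your substitution $b_{ij}=|c_{ij}|^{p}$, $r=q/p\ge 1$, followed by raising to the power $p$, turns the claim into the triangle inequality $\bigl\Vert \sum_{j} v^{(j)}\bigr\Vert_{r}\le \sum_{j}\bigl\Vert v^{(j)}\bigr\Vert_{r}$ in $\ell_{r}$, with the countable case handled by monotone convergence as you note. No gaps.
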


\section{Recent ``unexpected'' applications to classical problems}

\subsection{The Bohnenblust--Hille inequality with subpolynomial constants}

\label{989}

The Riemann hypothesis certainly motivated and inspired many prestigious
mathematicians from the 20th century to study Dirichlet sums in a more
extensive fashion (for instance, Bourgain, Enflo, or Montgomery \cite{B,E,M}%
). Perhaps, for this reason, in the first decades of the 20th century Harald
Bohr was merged in the study of Dirichlet series (see \cite{bohr1913,
bohr1914, bohr1914b}). One of his main interests was to determine the width
of the maximal strips on which a Dirichlet series can converge absolutely
but non uniformly. More precisely, for a Dirichlet series $%
\sum\limits_{n}a_{n}n^{-s},$ Bohr defined
\begin{equation*}
\sigma _{a}=\inf \left\{ r:\sum_{n}a_{n}n^{-s}\text{ converges for }%
Re(s)>r\right\} ,
\end{equation*}%
\begin{equation*}
\sigma _{u}=\inf \left\{ r:\sum_{n}a_{n}n^{-s}\text{ converges uniformly in }%
Re\left( s\right) >r+\varepsilon \text{ for every }\varepsilon >0\right\} ,
\end{equation*}%
\noindent and
\begin{equation*}
T=\sup \left\{ \sigma _{a}-\sigma _{u}\right\} .
\end{equation*}%
Bohr's question was: What is the value of $T?$

The Bohnenblust--Hille inequality was proved in 1931 by H.F. Bohnenblust and
E. Hille and it is a crucial tool to answer Bohr's problem: They proved that
$T=1/2.$

When dealing with the Bohnenblust--Hille inequality it is elucidative to
begin by proving Littlewood's $4/3$ inequality, a predecessor of the
Bohnenblust--Hille inequality. Littlewood's $4/3$ inequality was proved in
1930 to solve a problem posed by P.J. Daniell. It is worth noticing how
Holder's inequality plays a fundamental role in the argument used in the
proof. We include (for the sake of completeness) a proof of the optimality
of the power $4/3$ using the Kahane-Salem-Zygmund inequality.

\begin{theorem}[Littlewood's $4/3$ inequality]
\label{Littlewood43} There is a constant $\mathrm{L}_{\mathbb{K}}\geq1$ such
that
\begin{equation*}
\left( \sum\limits_{i,j=1}^{N}\left\vert U(e_{i},e_{j})\right\vert ^{\frac {4%
}{3}}\right) ^{\frac{3}{4}}\leq \mathrm{L}_{\mathbb{K}}\left\Vert
U\right\Vert
\end{equation*}
for every bilinear form $U:\ell_{\infty}^{N}\times\ell_{\infty}^{N}%
\rightarrow\mathbb{K}$ and every positive integer $N.$ Moreover, the power $%
4/3$ is optimal.
\end{theorem}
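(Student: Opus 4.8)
The plan is to prove the inequality part via the classical ``mixed-norm'' argument using Khinchine's inequality, and then interpolation as described in Section 2; the optimality part is handled separately by the Kahane--Salem--Zygmund inequality.

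\textbf{Step 1: the two mixed $(1,2)$ and $(2,1)$ estimates.} First I would fix a bilinear form $U:\ell_\infty^N\times\ell_\infty^N\to\mathbb{K}$ with $\|U\|\le 1$ and write $a_{ij}=U(e_i,e_j)$. For a fixed $i$, the linear functional $z\mapsto U(e_i,z)$ on $\ell_\infty^N$ has norm at most $1$, so $\sum_{j=1}^N|a_{ij}|=\|U(e_i,\cdot)\|\le 1$; a symmetric bound holds summing over $i$ with $j$ fixed. Next, the key half: I claim
\begin{equation*}
\sum_{i=1}^N\left(\sum_{j=1}^N|a_{ij}|^2\right)^{1/2}\le \mathrm{A}_{\mathbb{K},1}^{-1}\,\|U\|.
\end{equation*}
To see this, apply the (one-dimensional) Khinchine inequality in the $j$-variable for each fixed $i$:
\begin{equation*}
\left(\sum_{j=1}^N|a_{ij}|^2\right)^{1/2}\le \mathrm{A}_{\mathbb{K},1}^{-1}\int \Bigl|\sum_{j=1}^N a_{ij}\,\varepsilon_j(t)\Bigr|\,dt,
\end{equation*}
then sum over $i$, interchange the sum and the integral, and recognize that for each fixed sign choice $(\varepsilon_j(t))$ the point $(\varepsilon_1(t),\dots,\varepsilon_N(t))$ lies in the unit ball of $\ell_\infty^N$, so $\sum_i|\sum_j a_{ij}\varepsilon_j(t)|=\sum_i|U(e_i,(\varepsilon_j(t))_j)|=\|U(\cdot,(\varepsilon_j(t))_j)\|\le\|U\|$ (here one uses that the functional $w\mapsto\sum_i w_i U(e_i,z)$ on $\ell_\infty^N$ attains its norm at an extreme point, i.e.\ a sign vector). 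By symmetry the same bound holds with the roles of $i$ and $j$ exchanged. Thus we have exactly the two hypotheses \eqref{221}, with $\mathrm{C}=\mathrm{A}_{\mathbb{K},1}^{-1}\|U\|$.

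\textbf{Step 2: interpolation to reach $4/3$.} Now I would invoke Theorem~\ref{gen.interp} (or, concretely, the H\"older/Minkowski computation already displayed in Section~\ref{puzz}) with $m=2$, $N=2$, exponent vectors $\mathbf{q}(1)=(1,2)$ and $\mathbf{q}(2)=(2,1)$, and $\theta_1=\theta_2=1/2$: the point $(\tfrac{1}{q_1},\tfrac{1}{q_2})=(\tfrac34,\tfrac34)$ is the midpoint of $(1,\tfrac12)$ and $(\tfrac12,1)$, so $q_1=q_2=\tfrac43$ and
\begin{equation*}
\left(\sum_{i,j=1}^N|a_{ij}|^{4/3}\right)^{3/4}=\|\mathbf{a}\|_{(4/3,4/3)}\le\|\mathbf{a}\|_{(1,2)}^{1/2}\,\|\mathbf{a}\|_{(2,1)}^{1/2}\le \mathrm{A}_{\mathbb{K},1}^{-1}\|U\|.
\end{equation*}
This gives the inequality with $\mathrm{L}_{\mathbb{K}}=\mathrm{A}_{\mathbb{K},1}^{-1}$ (equivalently $\sqrt{2}$ in the real case, $2/\sqrt{\pi}$ in the complex case). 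No step here is an obstacle; the only care needed is the attainment-of-norm remark in Step 1.

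\textbf{Step 3: optimality of $4/3$.} For this I would argue by contradiction: suppose $(\sum_{i,j}|U(e_i,e_j)|^{r})^{1/r}\le\mathrm{C}\|U\|$ for some $r<4/3$ and all $N$. Apply it to the Kahane--Salem--Zygmund bilinear form $T_{2,N}$ from Theorem~\ref{novv1}, whose coefficients are all $\pm1$, so the left side equals $(N^2)^{1/r}=N^{2/r}$, while $\|T_{2,N}\|\le\sqrt{32\log 12}\cdot N^{3/2}\cdot\sqrt{N!}$\,---\,wait, that bound is too weak; in fact for \emph{bilinear} forms the sharp KSZ estimate is $\|T_{2,N}\|\lesssim N^{3/2}$ (the $\sqrt{n!}$ factor is absent for $m$ fixed and the correct exponent is $(m+1)/2$ without the factorial; I would cite the bilinear case of \cite{korea} directly, or note $\|T_{2,N}\|\le C N^{3/2}$). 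Then $N^{2/r}\le\mathrm{C}\,C\,N^{3/2}$ for all $N$ forces $2/r\le 3/2$, i.e.\ $r\ge 4/3$, a contradiction. The one point requiring attention is quoting the correct $N^{3/2}$-order KSZ bound for a fixed number of variables rather than the coarser form stated in Theorem~\ref{novv1}; this is standard and is exactly the ``relatively small supremum norm'' phenomenon described before Theorem~\ref{novv1}. The main (minor) obstacle overall is bookkeeping the constant $\mathrm{A}_{\mathbb{K},1}$ and making the extreme-point argument in Step~1 precise.
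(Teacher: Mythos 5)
Your proof is correct and follows essentially the same route as the paper: Khinchine's inequality for the two mixed $(1,2)$-type norms, Minkowski (Proposition \ref{8776}) to pass to the $(2,1)$-norm, the H\"older/interpolation step of Section \ref{puzz} to reach the exponent $4/3$, and the same Kahane--Salem--Zygmund contradiction argument for optimality. Your observation that the $\sqrt{n!}$ in the statement of Theorem \ref{novv1} should be $\sqrt{m!}$ (so that $\left\Vert T_{2,N}\right\Vert \lesssim N^{3/2}$ for fixed $m=2$) is exactly the bound the paper's own optimality proof uses.
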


\begin{proof}
Note that%
\begin{equation*}
\sum\limits_{i,j=1}^{N}\left\vert U(e_{i},e_{j})\right\vert ^{\frac{4}{3}%
}\leq \left[ \sum\limits_{i=1}^{N}\left( \sum\limits_{j=1}^{N}\left\vert
U(e_{i},e_{j})\right\vert ^{2}\right) ^{\frac{1}{2}}\right] ^{\frac{2}{3}}%
\left[ \left( \sum\limits_{i=1}^{N}\left( \sum\limits_{j=1}^{N}\left\vert
U(e_{i},e_{j})\right\vert \right) ^{2}\right) ^{\frac{1}{2}}\right] ^{\frac{2%
}{3}}
\end{equation*}%
is a particular case of the procedure from Section 2. Now we just need to
estimate the two factors above. From the Khinchine inequality we have

\begin{align*}
\sum\limits_{i=1}^{N}\left( \sum\limits_{j=1}^{N}\left\vert
U(e_{i},e_{j})\right\vert ^{2}\right) ^{\frac{1}{2}}& \leq \sqrt{2}%
\sum\limits_{i=1}^{N}\int\limits_{0}^{1}\left\vert
\sum\limits_{j=1}^{N}r_{j}(t)U(e_{i},e_{j})\right\vert dt \\
& \leq \sqrt{2}\int\limits_{0}^{1}\sum\limits_{i=1}^{N}\left\vert
U(e_{i},\sum\limits_{j=1}^{N}r_{j}(t)e_{j})\right\vert dt \\
& \leq \sqrt{2}\sup_{t\in \left[ 0,1\right] }\sum\limits_{i=1}^{N}\left\vert
U(e_{i},\sum\limits_{j=1}^{N}r_{j}(t)e_{j})\right\vert \\
& \leq \sqrt{2}\left\Vert U\right\Vert .
\end{align*}%
By symmetry, the same is true swapping $i$ and $j$. From Minkowski's
inequality we have
\begin{equation*}
\left( \sum\limits_{i=1}^{N}\left( \sum\limits_{j=1}^{N}\left\vert
U(e_{i},e_{j})\right\vert \right) ^{2}\right) ^{\frac{1}{2}}\leq
\sum\limits_{j=1}^{N}\left( \sum\limits_{i=1}^{N}\left\vert
U(e_{i},e_{j})\right\vert ^{2}\right) ^{\frac{1}{2}}\leq \sqrt{2}\left\Vert
U\right\Vert
\end{equation*}
and combining all these inequalities we obtain the result with
\begin{equation*}
\mathrm{L}_{\mathbb{K}}=\sqrt{2}.
\end{equation*}%
To prove the optimality of the exponent $4/3$ we can use the
Kahane--Salem--Zygmund inequality. Let $T_{2,N}:\ell _{\infty
}^{N}\rightarrow \mathbb{C}$ be the bilinear form satisfying the multilinear
Kahane--Salem--Zygmund inequality (Theorem \ref{novv1}). Then%
\begin{equation*}
\left( \sum\limits_{i,j=1}^{N}\left\vert T_{2,N}(e_{i},e_{j})\right\vert
^{q}\right) ^{\frac{1}{q}}\leq \sqrt{2}\mathrm{C}N^{\frac{3}{2}}
\end{equation*}%
and thus%
\begin{equation*}
N^{\frac{2}{q}}\leq \sqrt{2}\mathrm{C}N^{\frac{3}{2}}.
\end{equation*}%
Next, letting $N\rightarrow \infty $ we conclude that $q\geq \frac{4}{3}.$
\end{proof}

The natural generalization of Littlewood's $4/3$ inequality is the
Bohnenblust--Hille inequality. This inequality essentially says that for $%
m>2 $ the exponent $\frac{4}{3}$ can be replaced by $\frac{2m}{m+1}$, and
this exponent is optimal. More precisely, it asserts that, for any $m\geq 2$%
, there exists a constant $\mathrm{C}_{\mathbb{K},m}\geq 1$ such that, for
all $m$-linear forms $T:\ell _{\infty }^{N}\times \dots \times \ell _{\infty
}^{N}\rightarrow \mathbb{K}$,
\begin{equation}  \label{eq51}
\left( \sum\limits_{i_{1},\ldots ,i_{m}=1}^{N}\left\vert
T(e_{i_{^{1}}},\ldots ,e_{i_{m}})\right\vert ^{\frac{2m}{m+1}}\right) ^{%
\frac{m+1}{2m}}\leq \mathrm{C}_{\mathbb{K},m} \left\Vert T\right\Vert,
\end{equation}
and all $N$.

This result was overlooked and, sometimes, rediscovered during the last $80$
years. Different approaches led to different values of the constants $%
\mathrm{C}_{m}$. Let us denote the optimal constants satisfying equation %
\eqref{eq51} above by $\mathrm{B}_{\mathbb{K},m}^{\mathrm{mult}}$. As a
matter of fact, controlling the growth of the constants $\mathrm{B}_{\mathbb{%
K},m}^{\mathrm{mult}}$ is crucial for applications, as it is being left
clear along the paper.

Now we show how a suitable use of H\"{o}lder's inequality (Theorem \ref%
{gen.interp}) provides a very simple proof of the Bohnenblust--Hille
inequality, with (so far!) the best known constants.

With the ingredients of Section 4 we can easily obtain an inductive formula
for $\mathrm{B}_{\mathbb{K},m}^{\mathrm{mult}}$. We present a sketch of the
proof (more details can be found in \cite{bps}; we also refer to the excellent survey \cite{sevillap}).

\begin{theorem}[Bohnenblust--Hille inequality]
\label{PROPINDUCMULT} For any positive integer $m$, there exists a constant $%
\mathrm{B}_{\mathbb{K},m}^{\mathrm{mult}}\geq 1$ such that, for all $m$%
-linear forms $L:\ell _{\infty }^{N}\times \dots \times \ell _{\infty
}^{N}\rightarrow \mathbb{K}$ and all $N$,
\begin{equation}
\left( \sum\limits_{i_{1},\ldots ,i_{m}=1}^{N}\left\vert
L(e_{i_{^{1}}},\ldots ,e_{i_{m}})\right\vert ^{\frac{2m}{m+1}}\right) ^{%
\frac{m+1}{2m}}\leq \mathrm{B}_{\mathbb{K},m}^{\mathrm{mult}}\left\Vert
L\right\Vert ,
\end{equation}
with $\mathrm{B}_{\mathbb{K},1}^{\mathrm{mult}}=1$ and $\mathrm{B}_{\mathbb{K%
},m}^{\mathrm{mult}}\leq \mathrm{A}_{\mathbb{K},\frac{2k}{k+1}}^{-1}\mathrm{B%
}_{\mathbb{K},k}^{\mathrm{mult}}.$
\end{theorem}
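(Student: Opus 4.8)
The plan is to prove the estimate by induction on $m$, establishing the recursion in the form $\mathrm{B}_{\mathbb{K},m}^{\mathrm{mult}}\le \mathrm{A}_{\mathbb{K},\frac{2(m-1)}{m}}^{-1}\,\mathrm{B}_{\mathbb{K},m-1}^{\mathrm{mult}}$ (the case $k=m-1$ of the displayed inequality). The base case $m=1$ is immediate: for a linear form $L\colon\ell_\infty^N\to\mathbb{K}$ one has $\sum_{i}|L(e_i)|=\|L\|$ (pick unimodular $\lambda_i$ with $\lambda_iL(e_i)=|L(e_i)|$ and evaluate $L$ at $\sum_i\lambda_ie_i$), and since $\tfrac{2\cdot 1}{1+1}=1$ this gives $\mathrm{B}_{\mathbb{K},1}^{\mathrm{mult}}=1$.

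For the inductive step fix $m\ge 2$, write $a_{\mathbf{i}}=L(e_{i_1},\dots,e_{i_m})$ for $\mathbf{i}\in\mathcal{M}(m,N)$, and put $p:=\tfrac{2(m-1)}{m}\in[1,2]$, which is exactly the Bohnenblust--Hille exponent for $(m-1)$-linear forms. The crux is the partial estimate: for each coordinate $k\in\{1,\dots,m\}$,
\[
\left(\sum_{\widehat{i_k}}\left(\sum_{i_k=1}^{N}|a_{\mathbf{i}}|^{2}\right)^{\frac{m-1}{m}}\right)^{\frac{m}{2(m-1)}}\le \mathrm{A}_{\mathbb{K},\frac{2(m-1)}{m}}^{-1}\,\mathrm{B}_{\mathbb{K},m-1}^{\mathrm{mult}}\,\|L\|,
\]
where $\widehat{i_k}$ runs over the remaining $m-1$ indices. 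To obtain it I would freeze the $k$-th slot against a vector $w^{(k)}(t)=\sum_{i_k}\varepsilon_{i_k}(t)e_{i_k}$ of Rademacher (or, in the complex case, Steinhaus) variables; this vector has norm $1$, so the $(m-1)$-linear form it produces from $L$ has norm $\le\|L\|$, and its coefficients are precisely $b_{\widehat{i_k}}(t):=\sum_{i_k}a_{\mathbf{i}}\varepsilon_{i_k}(t)$. The induction hypothesis then yields $\big(\sum_{\widehat{i_k}}|b_{\widehat{i_k}}(t)|^{p}\big)^{1/p}\le \mathrm{B}_{\mathbb{K},m-1}^{\mathrm{mult}}\|L\|$ for every $t$. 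Applying Khinchine's inequality with exponent $p$ to the inner $i_k$-sum, raising to the $p$-th power, summing over $\widehat{i_k}$, and swapping sum and integral (Fubini) gives the partial estimate.

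Finally I would feed these $m$ estimates into Corollary~\ref{cor33} with parameters $k\leftarrow m-1$, $q\leftarrow 2$, $s\leftarrow\tfrac{2(m-1)}{m}$; a short computation gives $\rho=\tfrac{2m}{m+1}$, and $\mathcal{P}_{m-1}(m)$ is exactly the set of complements of singletons, so the corollary reads
\[
\left(\sum_{\mathbf{i}\in\mathcal{M}(m,N)}|a_{\mathbf{i}}|^{\frac{2m}{m+1}}\right)^{\frac{m+1}{2m}}\le \prod_{k=1}^{m}\left(\sum_{\widehat{i_k}}\left(\sum_{i_k=1}^{N}|a_{\mathbf{i}}|^{2}\right)^{\frac{m-1}{m}}\right)^{\frac{m}{2(m-1)}\cdot\frac{1}{m}}.
\]
Bounding each of the $m$ factors by the partial estimate and collecting the $\tfrac1m$ powers gives $\big(\sum_{\mathbf{i}}|a_{\mathbf{i}}|^{\frac{2m}{m+1}}\big)^{\frac{m+1}{2m}}\le \mathrm{A}_{\mathbb{K},\frac{2(m-1)}{m}}^{-1}\mathrm{B}_{\mathbb{K},m-1}^{\mathrm{mult}}\|L\|$, which is the asserted recursion.

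The main obstacle is the partial estimate: one must see clearly that freezing a variable against a random sign (or Steinhaus) vector produces an $(m-1)$-linear form of norm $\le\|L\|$ whose coefficients are exactly those randomized inner sums, and then match the Khinchine exponent with the $(m-1)$-linear Bohnenblust--Hille exponent $p=\tfrac{2(m-1)}{m}$ so that, after Fubini, the random averaging collapses to $\mathrm{B}_{\mathbb{K},m-1}^{\mathrm{mult}}\|L\|$; everything else is bookkeeping with the exponents of Corollary~\ref{cor33}. (Running the same argument with a block $\widehat S$ of size $m-k$ and the $k$-linear inequality would instead give $\mathrm{B}_{\mathbb{K},m}^{\mathrm{mult}}\le \mathrm{A}_{\mathbb{K},\frac{2k}{k+1}}^{-(m-k)}\mathrm{B}_{\mathbb{K},k}^{\mathrm{mult}}$, of which the stated inequality is the case $k=m-1$.)
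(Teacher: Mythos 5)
Your argument is correct and is essentially the paper's own proof: the partial estimates come from Khinchine's inequality plus the inductive hypothesis (the paper states this only for the block $\widehat{S}=\{m\}$ and uses Proposition~\ref{8776} to permute the position of the exponent $2$, whereas you obtain all $m$ estimates directly by symmetry and let Corollary~\ref{cor33} --- which already packages Theorem~\ref{gen.interp} together with Minkowski --- do the combining, but these are the same two ingredients). Your parenthetical remark that the general-$k$ block argument yields $\mathrm{A}_{\mathbb{K},\frac{2k}{k+1}}^{-(m-k)}\mathrm{B}_{\mathbb{K},k}^{\mathrm{mult}}$ is also consistent with how the recursion is used later in the paper (e.g.\ the even-$m$ estimate with exponent $-m/2$).
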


\begin{proof}
We present a simple proof for the case $k=m-1$, which is the most important,
since it provides better constants (and the proof for other values of $k$ is
similar). The proof for $\mathbb{R}$ is essentially the same as the proof
for $\mathbb{C}$, so we present only the proof for the complex case. Let $%
n\geq 1$ and let $L=\sum_{\mathbf{i}\in \mathbb{N}^{m}}a_{\mathbf{i}%
}z_{i_{1}}^{(1)}\dots z_{i_{m}}^{(m)}$ be an $m$-linear form on $\ell
_{\infty }^{N}\times \dots \times \ell _{\infty }^{N}$.

From the Khinchine inequality we have
\begin{equation*}
\left( \sum_{\mathbf{i}_{S}}\left( \sum_{\mathbf{i}_{\hat{S}}}|a_{\mathbf{i}%
}|^{2}\right) ^{\frac{1}{2}\times \frac{2m-2}{m}}\right) ^{\frac{m}{2m-2}%
}\leq \mathrm{A}_{\mathbb{C},\frac{2m-2}{m}}^{-1}\mathrm{B}_{\mathbb{C}%
,m-1}^{\mathrm{mult}}\Vert L\Vert .
\end{equation*}%
with exponents%
\begin{equation*}
\left( q_{1},\ldots,q_{m}\right) =\left( \frac{2m-2}{m},\ldots,\frac{2m-2}{m}%
,2\right)
\end{equation*}%
From the \textquotedblleft Minkowski inequality\textquotedblright\
(Proposition \ref{8776}) we can obtain analogous estimates if we take the $2$
in the last position and move it backwards making it take every position
from the last to the first; in other words, considering the following
exponents:
\begin{equation*}
\left( \frac{2m-2}{m},\ldots,2,\frac{2m-2}{m}\right) ,\ldots,\left( 2,\frac{%
2m-2}{m},\ldots,\frac{2m-2}{m}\right)
\end{equation*}
and the same constant. Using the H\"{o}lder inequality for multiple
exponents we reach the result.
\end{proof}

Using the values of the constants $\mathrm{A}_{\mathbb{K},p}$ we conclude
that
\begin{equation}
\mathrm{B}_{\mathbb{C},m}^{\mathrm{mult}}\leq \prod\limits_{j=2}^{m}\Gamma
\left( 2-\frac{1}{j}\right)^{\frac{j}{2-2j}}.  \label{uyt}
\end{equation}

For real scalars and $m\geq14$,
\begin{equation}
\mathrm{B}_{\mathbb{R},m}^{\mathrm{mult}}\leq 2^{\frac{446381}{55440}-\frac{m%
}{2}}\prod\limits_{j=14}^{m}\left( \frac{\Gamma \left( \frac{3}{2}-\frac{1}{j%
}\right) }{\sqrt{\pi }}\right) ^{\frac{j}{2-2j}}  \label{uyt5}
\end{equation}
and
\begin{equation*}
\mathrm{B}^{\mathrm{mult}}_{\mathbb{R},m}\leq\prod \limits_{j=2}^{m}2^{\frac{%
1}{2j-2}}= \left(\sqrt{2}\right)^{\sum_{j=1}^{m-1} 1/j}.
\end{equation*}
for $2\leq m\leq13$.

However, a first look at \eqref{uyt} and \eqref{uyt5} gives \textit{a priori
}no clues on their behavior. The following consequences of Theorem \ref%
{PROPINDUCMULT} taken from \cite{bps} are illuminating:

\begin{itemize}
\item There exists $\kappa _{1}>0$ such that, for any $m\geq 1$,
\begin{equation*}
\mathrm{B}_{\mathbb{C},m}^{\mathrm{mult}}\leq \kappa _{1}m^{\frac{1-\gamma }{%
2}}<\kappa _{1}m^{0.211392}.
\end{equation*}

\item There exists $\kappa _{2}>0$ such that, for any $m\geq 1$,
\begin{equation*}
\mathrm{B}_{\mathbb{R},m}^{\mathrm{mult}}\leq \kappa _{2}m^{\frac{2-\log
2-\gamma }{2}}<\kappa _{2}m^{0.36482}.
\end{equation*}
\end{itemize}

It is interesting to note that some old estimates $\mathrm{B}_{\mathbb{K}%
,m}^{\mathrm{mult}}$ can be easily recovered just by choosing different $%
\left( q_{1},\ldots ,q_{m}\right) $ when using H\"{o}lder's inequality (or
using Theorem \ref{PROPINDUCMULT} directly). For instance,

\begin{itemize}
\item Davie (\cite{davie}, 1973).
\begin{equation*}
\mathrm{B}_{\mathbb{K},m}^{\mathrm{mult}} \leq \left( \sqrt{2}\right) ^{m-1}.
\end{equation*}
Using the Khinchine inequality, we have
\begin{equation*}
\left( \sum_{i_{1}=1}^{n}\left( \dots \left( \sum_{i_{m}=1}^{n}|a_{\mathbf{i}%
}|^{q_{m}}\right) ^{\frac{q_{m-1}}{q_{m}}}\dots \right) ^{\frac{q_{1}}{q_{2}}%
}\right) ^{\frac{1}{q_{1}}}\leq \left( \sqrt{2}\right) ^{m-1}\Vert L\Vert
\end{equation*}%
for
\begin{equation*}
\left( q_{1},\ldots ,q_{m}\right) =\left( 1,2,\ldots ,2\right)
\end{equation*}%
Using the \textquotedblleft Minkowski inequality\textquotedblright\
(Proposition \ref{8776}) we get the same estimate for%
\begin{equation*}
\left( q_{1},\ldots ,q_{m}\right) =\left( 2,1,\ldots ,2\right) ,\ldots
,\left( q_{1},\ldots ,q_{m}\right) =\left( 2,\ldots ,2,1\right)
\end{equation*}%
with the same constant. Now, using Theorem \ref{gen.interp} we conclude the
proof with
\begin{equation*}
\mathrm{B}_{\mathbb{K},m}^{\mathrm{mult}}\leq \left( \sqrt{2}\right) ^{m-1}.
\end{equation*}

\item Pellegrino and Seoane-Sep\'ulveda (\cite{pseo}, 2012).
\begin{eqnarray*}
\mathrm{B}_{\mathbb{K},m}^{\mathrm{mult}} &\leq &\mathrm{A}_{\mathbb{K},%
\frac{2m}{m+2}}^{-m/2}\mathrm{B}_{\mathbb{K},m/2}^{\mathrm{mult}}\text{ for }%
m~\text{even, and} \\
\mathrm{B}_{\mathbb{K},m}^{\mathrm{mult}} &\leq &\left( \mathrm{A}_{\mathbb{K%
},\frac{2m-2}{m+1}}^{\frac{-m-1}{2}}\mathrm{B}_{\mathbb{K},\frac{m-1}{2}}^{%
\mathrm{mult}}\right) ^{\frac{m-1}{2m}}\left( \mathrm{A}_{\mathbb{K},\frac{%
2m+2}{m+3}}^{\frac{1-m}{2}}\mathrm{B}_{\mathbb{K},\frac{m+1}{2}}^{\mathrm{%
mult}}\right) ^{\frac{m+1}{2m}}\text{, for }m\text{ odd.}
\end{eqnarray*}
\end{itemize}

When $m$ is even and $k=m/2$, we use Khinchine inequality to obtain
estimates for the inequalities with the exponent%
\begin{equation*}
\left( q_{1},\ldots ,q_{m}\right) =\left( \frac{2m}{m+2},\ldots ,\frac{2m}{%
m+2},2,\ldots ,2\right) \text{ }
\end{equation*}
and using the Minkowski inequality the same estimate is obtained for%
\begin{equation*}
\left( q_{1},\ldots ,q_{m}\right) =\left( 2,\ldots ,2,\frac{2m}{m+2},\ldots ,%
\frac{2m}{m+2}\right) .\text{ }
\end{equation*}%
Using Proposition \ref{PROPINDUCMULT} we obtain
\begin{equation*}
\mathrm{B}_{\mathbb{K},m}^{\mathrm{mult}}\leq \mathrm{A}_{\mathbb{K},\frac{2m%
}{m+2}}^{-m/2}\mathrm{B}_{\mathbb{K},m/2}^{\mathrm{mult}}.
\end{equation*}%
The case $m$ odd is somewhat similar, although it needs a little \emph{trick}%
. It is worth mentioning that these estimates from \cite{pseo} can be
somewhat derived from abstract results appearing in \cite{defa}.

\subsection{Quantum Information Theory}

Here we shall briefly describe a result by Montanaro \cite[Theorem 5]%
{Montanaro} which provided an application for the optimal Bohnenblust-Hille
constants within the field of Quantum Physics. This presentation is based on
Schwarting's Ph.D. dissertation \cite[Section 2.2.5]{ursula}. For a more
detailed information we refer the interested reader to the Ph.D.
dissertation of Bri\"et \cite[Chapter 1]{briet}, which provides a very clear
introduction to the whole topic of nonlocal games.

A classical nonlocal game is a pair $\mathcal{G} = \left(A,\pi\right)$
consisting on a function (called \emph{predicate}) $A : \mathcal{A} \times
\mathcal{B} \times \mathcal{S} \times \mathcal{S} \to \{\pm1\}$ and a
probability distribution $\pi : \mathcal{S} \times \mathcal{T} \to [0,1]$.
The game involves three parties: a person called the \emph{referee} and two
\emph{players} (usually called Alice and Bob). When the game starts, the
referee picks a question $(s,t) \in \mathcal{S} \times \mathcal{T}$
according to the probability distribution $\pi$ and, then, sends it to Alice
and Bob, who must reply independently (they are not allowed to communicate
between each other once the game has begun) by providing an answer $a\in%
\mathcal{A}$ and $b\in\mathcal{B}$ each one. The players win the game if $%
A(a,b,s,t)=1$, and lose otherwise. The players' goal is to maximize their
chance of winning. A XOR game is a nonlocal game in which the answer sets $%
\mathcal{A},\mathcal{B}$ are $\{\pm1\}$ and the predicate $A$ depends only
on the exclusive-OR (XOR) of the answers given by the players and the value
of a Boolean function $\mathcal{S} \times \mathcal{T} \to \{\pm1\}$, which
from the predicate may be seen as a matrix with entries on $\{\pm1\}$. A
game with $m$-players is described similarly in the following fashion.

An $m$-player XOR (exclusive OR) game is a pair $\mathcal{G}=\left(
\pi,A\right) $ consisting of a matrix $A=\left( a_{\mathbf{i}}\right) _{%
\mathbf{i}\in \mathcal{M}(m,n)}$, for which each entry $a_{\mathbf{i}}\in
\{\pm 1\}$, and a probability distribution $\pi :\mathcal{M}(m,n)\rightarrow
\lbrack 0,1]$. The game consists on \emph{the referee} picking an $m$-tuple $%
\mathbf{i}=(i_{1},\dots ,i_{m})\in \mathcal{M}(m,n)$ according to the
probability distribution $\pi $ and sending each question $i_{k}$ to \emph{%
the player} $k$, which, by means of a classical strategy, must reply upon
this question with a (deterministic) answer map $y_{k}:\{1,\dots
,n\}\rightarrow \{\pm 1\}$. The players win if and only if the product of
their answers equals the corresponding entry in the matrix $A$, that is if
\begin{equation*}
y_{1}(i_{1})\cdots y_{m}(i_{m})=a_{\mathbf{i}}.
\end{equation*}%
Concerning the complexity of a XOR game, one defines the bias $\beta (G)$ to
be the greatest difference between the chance of winning and the chance of
loosing the game for the optimal classical strategy. Therefore, the
classical bias of an $m$-player XOR game is given by
\begin{equation*}
\beta (G)=\max_{y_{1},\dots ,y_{m}\in \{\pm 1\}^{n}} \left\vert \sum_{%
\mathbf{i}\in \mathcal{M}(m,n)}\pi (\mathbf{i})a_{\mathbf{i}%
}y_{1}(i_{1})\cdots y_{m}(i_{m})\right\vert .
\end{equation*}
If we define the $m$-linear map $T:\ell _{\infty }^{n}\times \dots \times
\ell _{\infty }^{n}\rightarrow \mathbb{R}$ by $T(e_{i_{1}},\dots
,e_{i_{m}}):=a_{\mathbf{i}}\pi (\mathbf{i})$, then the bias will be
\begin{equation*}
\beta (G)=\Vert T\Vert .
\end{equation*}

A natural problem is to find the game for which the classical bias is
minimized. It is known that there exists an $m$-player XOR game $\mathcal{G}$
for which
\begin{equation*}
\beta(\mathcal{G}) \leq n^{-\frac{m-1}{2}}
\end{equation*}
(see \cite{FordGal}). Using the Bohnenblust-Hille inequality it is
straightforward to obtain lower bounds for the classical bias of an $m$%
-player XOR games (see \cite[Theorem 5]{Montanaro}).

\begin{theorem}
{\cite[Theorem 5]{Montanaro}} For every $m$-player XOR game $\mathcal{G} =
(\pi,A)$,
\begin{equation*}
\beta(\mathcal{G}) \geq \frac{1}{\kappa m^{^{0.36482}}}n^{\frac{1-m}{2}},
\end{equation*}
where $\kappa>0$ is an universal constant.
\end{theorem}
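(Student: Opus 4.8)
The plan is to combine the Kahane--Salem--Zygmund bound with the Bohnenblust--Hille inequality in its quantitative form. First I would make the identification explained just above the statement: given an $m$-player XOR game $\mathcal{G}=(\pi,A)$, define the $m$-linear form $T:\ell_\infty^n\times\dots\times\ell_\infty^n\to\mathbb{R}$ by $T(e_{i_1},\dots,e_{i_m}):=a_{\mathbf{i}}\,\pi(\mathbf{i})$, so that $\beta(\mathcal{G})=\Vert T\Vert$. This reduces the claimed lower bound on the bias to a lower bound on $\Vert T\Vert$ in terms of the coefficients $a_{\mathbf{i}}\pi(\mathbf{i})$.

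Next I would apply the multilinear Bohnenblust--Hille inequality (Theorem~\ref{PROPINDUCMULT}) to $T$, giving
\begin{equation*}
\left(\sum_{\mathbf{i}\in\mathcal{M}(m,n)}|a_{\mathbf{i}}\pi(\mathbf{i})|^{\frac{2m}{m+1}}\right)^{\frac{m+1}{2m}}\leq \mathrm{B}_{\mathbb{R},m}^{\mathrm{mult}}\,\Vert T\Vert = \mathrm{B}_{\mathbb{R},m}^{\mathrm{mult}}\,\beta(\mathcal{G}).
\end{equation*}
Then I would use the real-scalar subpolynomial estimate $\mathrm{B}_{\mathbb{R},m}^{\mathrm{mult}}\leq \kappa\, m^{0.36482}$ recorded after Theorem~\ref{PROPINDUCMULT}. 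So the whole task comes down to bounding the left-hand sum from below by something of order $n^{\frac{m-1}{2}}$, uniformly over probability distributions $\pi$ and sign matrices $A$; equivalently, to exhibiting (or arguing the existence of) a game where this lower bound is attained up to constants.

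For the lower bound on the $\ell_{2m/(m+1)}$-sum I would choose $\pi$ to be the uniform distribution on $\mathcal{M}(m,n)$, i.e. $\pi(\mathbf{i})=n^{-m}$ for every $\mathbf{i}$, and let $A=(a_{\mathbf{i}})$ be the sign matrix supplied by the Kahane--Salem--Zygmund inequality (Theorem~\ref{novv1}), whose associated $m$-linear form $T_{m,n}$ satisfies $\Vert T_{m,n}\Vert\leq \sqrt{32\log(6m)}\,n^{\frac{m+1}{2}}\sqrt{n!}$. Hmm --- one must be a little careful here, because the ``game'' form is $T_{m,n}$ rescaled by $n^{-m}$, so $\beta(\mathcal{G})=n^{-m}\Vert T_{m,n}\Vert$. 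Plugging the KSZ bound in and using Stirling $\sqrt{n!}\leq C\,n^{n/2+1/4}e^{-n/2}$... wait, that does not immediately give $n^{-(m-1)/2}$. The cleaner route is the one actually needed: it suffices to cite the known existence result $\beta(\mathcal{G})\leq n^{-(m-1)/2}$ from \cite{FordGal} only as context, and instead prove the \emph{lower} bound $\beta(\mathcal{G})\geq \kappa^{-1}m^{-0.36482}n^{(1-m)/2}$ for \emph{every} game by a counting argument: with the BH inequality in hand, for any game one has, using that there are $n^m$ multi-indices and $|a_{\mathbf{i}}|=1$,
\begin{equation*}
\left(\sum_{\mathbf{i}}\pi(\mathbf{i})^{\frac{2m}{m+1}}\right)^{\frac{m+1}{2m}}\leq \mathrm{B}_{\mathbb{R},m}^{\mathrm{mult}}\,\beta(\mathcal{G}),
\end{equation*}
and by the power-mean (or H\"older) inequality applied to the probability vector $(\pi(\mathbf{i}))$ on a set of cardinality $n^m$, the left-hand side is at least $(n^m)^{\frac{m+1}{2m}-1}=n^{-\frac{m-1}{2}}$. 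Combining with $\mathrm{B}_{\mathbb{R},m}^{\mathrm{mult}}\leq\kappa m^{0.36482}$ yields exactly $\beta(\mathcal{G})\geq \kappa^{-1}m^{-0.36482}n^{\frac{1-m}{2}}$.

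The main obstacle is getting the direction and normalization of the power-mean step right: one needs that for a probability vector $p$ on $N$ points and exponent $r=\frac{2m}{m+1}\in(1,2)$, $\Vert p\Vert_r\geq N^{1/r-1}$, which is just H\"older / Jensen since $r>1$, so it is genuinely routine once set up correctly. Everything else is bookkeeping: the passage from $T$ to $\mathcal{G}$, quoting Theorem~\ref{PROPINDUCMULT}, and substituting the subpolynomial constant bound. The KSZ inequality is what shows this lower bound is essentially sharp (matching the $n^{-(m-1)/2}$ upper bound of \cite{FordGal}), which is why it appears in Section~4, but strictly speaking it is not needed for the one-line proof of the stated inequality.
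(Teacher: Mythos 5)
Your final argument is correct and is essentially identical to the paper's proof: the paper writes $\sum_{\mathbf{i}}|T(e_{i_1},\dots,e_{i_m})|=\sum_{\mathbf{i}}\pi(\mathbf{i})=1$ and applies H\"older to get $1\leq\bigl(\sum|T|^{2m/(m+1)}\bigr)^{(m+1)/2m}(n^m)^{(m-1)/2m}$, which is exactly your power-mean step $\Vert\pi\Vert_r\geq N^{1/r-1}$ in rearranged form, followed by the Bohnenblust--Hille inequality and the subpolynomial bound on $\mathrm{B}_{\mathbb{R},m}^{\mathrm{mult}}$. Your closing remark is also right that the Kahane--Salem--Zygmund construction plays no role in this lower bound and only matters for sharpness, so the abandoned detour at the start does not affect the validity of the proof.
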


\begin{proof}
Define the $m$-linear form $T:\ell _{\infty }^{n}\times \dots \times \ell
_{\infty }^{n}\rightarrow \mathbb{R}$ by $T(e_{i_{1}},\dots ,e_{i_{m}}):=a_{%
\mathbf{i}}\pi (\mathbf{i})$. Then,
\begin{equation*}
\sum_{\mathbf{i}\in \mathcal{M}(m,n)}\left\vert T(e_{i_{1}},\dots
,e_{i_{m}})\right\vert =\sum_{\mathbf{i}\in \mathcal{M}(m,n)}\pi (\mathbf{i}%
)=1.
\end{equation*}%
Applying H\"{o}lder's inequality and the Bohnenblust-Hille, we conclude that
\begin{align*}
\sum_{\mathbf{i}\in \mathcal{M}(m,n)}\left\vert T(e_{i_{1}},\dots,
e_{i_{m}})\right\vert \leq & \left( \sum_{\mathbf{i}\in \mathcal{M}%
(m,n)}\left\vert T(e_{i_{1}},\dots ,e_{i_{m}})\right\vert ^{\frac{2m}{m+1}%
}\right) ^{\frac{m+1}{2m}}\left( \sum_{\mathbf{i}\in \mathcal{M}%
(m,n)}1\right) ^{\frac{m-1}{2m}} \\
\leq & \mathrm{B}_{\mathbb{R},m}^{\mathrm{mult}}n^{\frac{m-1}{2}}\Vert
T\Vert =\mathrm{B}_{\mathbb{R},m}^{\mathrm{mult}}n^{\frac{m-1}{2}}\beta (%
\mathcal{G}).
\end{align*}

Using the best known estimates for the multilinear Bohnenblust--Hille
inequality we conclude that
\begin{equation*}
\beta (\mathcal{G})\geq \frac{1}{\kappa m^{\frac{2-\log 2-\gamma }{2}}n^{%
\frac{m-1}{2}}}>\frac{1}{\kappa m^{^{0.36482}}}n^{\frac{1-m}{2}}.
\end{equation*}
\end{proof}

This result, according to Montanaro (see \cite[p.4]{Montanaro}), implies a
very particular case of a conjecture of Aaronson and Ambainis (see \cite{aaa}%
). Also, recent advances on the real polynomial Bohnenblust-Hille inequality
(see, e.g., \cite{cjmps2014,pams2014}), combined with the CHSH inequality
(due to Clauser, Horne, Shimony, and Holt in the late 1960's), can be
employed in the proof of Bell's theorem, which states that certain
consequences of entanglement in quantum mechanics cannot be reproduced by
local hidden variable theories. We refer the interested reader to the
seminal paper, \cite{CHSH}, in which more informtaion regarding this CHSH
inequality can be found.

\subsection{Power series and the Bohr radius problem \label{811}}

The following question was addressed by H. Bohr in 1914:

\begin{quote}
\emph{How large can the sum of the mudulii of the terms of a convergent
power series be?}
\end{quote}

\noindent The answer was given by the following theorem, which was
independently obtain by Bohr, Riesz, Schur, and Wiener:

\begin{theorem}
\label{th54} Suppose that a power series $\sum_{k=0}^{\infty }c_{k}z^{k}$
converges for $z $ in the unit disk, and $\left\vert \sum_{k=0}^{\infty
}c_{k}z^{k}\right\vert <1$ when $\left\vert z\right\vert <1.$ Then $%
\sum_{k=0}^{\infty }\left\vert c_{k}z^{k}\right\vert <1$ when $\left\vert
z\right\vert <1/3.$ Moreover, the radius $1/3$ is the best possible.
\end{theorem}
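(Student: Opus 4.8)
The plan is to reduce the statement to a coefficient estimate for bounded analytic functions and then to a one-variable geometric-series computation. Writing $f(z)=\sum_{k\ge 0}c_kz^k$, the hypothesis says that $f$ is analytic on the unit disk with $|f(z)|<1$ there; in particular $a:=|c_0|=|f(0)|<1$. The crucial ingredient is the coefficient bound (sometimes attributed to Wiener)
\begin{equation*}
|c_k|\le 1-a^2\qquad\text{for all }k\ge 1.
\end{equation*}
To establish it for a fixed $k$, I would average $f$ over the $k$-th roots of unity, setting $F(z)=\frac{1}{k}\sum_{j=0}^{k-1}f(e^{2\pi i j/k}z)=c_0+c_kz^k+c_{2k}z^{2k}+\cdots$, which still satisfies $|F|<1$ on the disk and depends only on $z^k$; post-composing with the disk automorphism $w\mapsto\frac{w-c_0}{1-\overline{c_0}w}$ then produces a function of the form $\Phi(z^k)$ with $\Phi$ analytic on the disk, $\Phi(0)=0$, $|\Phi|\le 1$, and $\Phi'(0)=c_k/(1-a^2)$. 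The Schwarz lemma gives $|\Phi'(0)|\le 1$, which is exactly the claimed bound (alternatively one may simply invoke it as a classical fact).

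Granting this, for $|z|=r<1$ I would sum directly:
\begin{equation*}
\sum_{k\ge 0}|c_k|r^k\le a+(1-a^2)\sum_{k\ge 1}r^k=a+(1-a^2)\frac{r}{1-r}.
\end{equation*}
Since $0\le a<1$, dividing the desired inequality $a+(1-a^2)\frac{r}{1-r}<1$ by $1-a>0$ turns it into $(1+a)\frac{r}{1-r}<1$, i.e.\ $r<\frac{1}{2+a}$, and $\frac{1}{2+a}>\frac13$ because $a<1$. Hence for every $r<\frac13$ the bound holds, simultaneously for all admissible $f$, which proves $\sum_{k\ge 0}|c_kz^k|<1$ whenever $|z|<1/3$.

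For the optimality of $1/3$ I would test the nearly extremal family of Möbius automorphisms $\varphi_a(z)=\frac{a-z}{1-az}$, $a\in(0,1)$, which map the disk into itself and expand as $\varphi_a(z)=a-(1-a^2)\sum_{k\ge 1}a^{k-1}z^k$. A short computation gives, for $|z|=r$,
\begin{equation*}
\sum_{k\ge 0}|c_k|r^k=a+(1-a^2)\frac{r}{1-ar},
\end{equation*}
which equals $1$ precisely when $r=\frac{1}{1+2a}$ and is strictly larger for bigger $r$. Letting $a\to 1^-$ makes $\frac{1}{1+2a}\to\frac13$, so for any $r>1/3$ there is an admissible power series with $\sum_{k\ge 0}|c_kz^k|>1$ at radius $r$; thus $1/3$ cannot be enlarged.

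The summation and the optimality computation are routine; the real work is the coefficient lemma $|c_k|\le 1-|c_0|^2$. The point to be careful about there is that the roots-of-unity average $F$ genuinely depends only on $z^k$, so that after composing with the automorphism one legitimately obtains an analytic self-map $\Phi$ of the disk vanishing at the origin to which the Schwarz lemma applies; once that is set up, reading off $\Phi'(0)$ is immediate.
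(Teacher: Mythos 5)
The paper states this theorem without proof, presenting it as a classical result of Bohr, Riesz, Schur and Wiener, so there is no internal argument to compare yours against. Your proof is correct and is the standard one. The key coefficient bound $|c_k|\le 1-|c_0|^2$ for $k\ge 1$ is precisely the ``Wiener's inequality'' that the paper later invokes in the Bohr radius discussion, and your derivation of it is sound: the roots-of-unity average $F(z)=\frac1k\sum_{j}f(e^{2\pi ij/k}z)$ has only powers of $z^k$ in its Taylor expansion and still satisfies $|F|<1$, so it factors as $g(z^k)$ with $g$ an analytic self-map of the disk, $g(0)=c_0$, $g'(0)=c_k$; composing with the automorphism sending $c_0$ to $0$ and applying the Schwarz lemma gives $|c_k|/(1-|c_0|^2)=|\Phi'(0)|\le 1$. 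The summation step is also right: with $a=|c_0|<1$, the inequality $a+(1-a^2)\frac{r}{1-r}<1$ is equivalent to $r<\frac{1}{2+a}$, and $\frac{1}{2+a}>\frac13$ uniformly in $a$, which gives the radius $1/3$. For sharpness, the automorphisms $\varphi_a(z)=\frac{a-z}{1-az}$ yield $\sum_k|c_k|r^k=a+(1-a^2)\frac{r}{1-ar}$, which reaches $1$ at $r=\frac{1}{1+2a}\to\frac13$ as $a\to 1^-$; since for $r>\frac13$ one can choose $a$ with $\frac{1}{1+2a}<r$, the majorant sum already equals $1$ at a point of modulus less than $r$, so the conclusion fails for any radius exceeding $1/3$. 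All computations check out; nothing is missing.
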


Following Boas and Khavinson \cite{BK97}, the Bohr radius $\mathrm{K}_{n}$
of the $n$-dimensional polydisk is the largest positive number $r$ such that
all polynomials $\sum_{\alpha }a_{\alpha }z^{\alpha }$ on $\mathbb{C}^{n}$
satisfy
\begin{equation*}
\sup_{z\in r\mathbb{D}^{n}}\sum_{\alpha }|a_{\alpha }z^{\alpha }|\leq
\sup_{z\in \mathbb{D}^{n}}\left\vert \sum_{\alpha }a_{\alpha }z^{\alpha
}\right\vert .
\end{equation*}%
The Bohr radius $\mathrm{K}_{1}$ was estimated by H. Bohr, M. Riesz, I.
Schur and F. Wiener, and it was shown that $\mathrm{K}_{1}=1/3$ (Theorem \ref%
{th54}). For $n\geq 2$, exact values of $\mathrm{K}_{n}$ are unknown. In
\cite{BK97}, it was proved that%
\begin{equation}
\frac{1}{3}\sqrt{\frac{1}{n}}\leq \mathrm{K}_{n}\leq 2\sqrt{\frac{\log n}{n}}%
.  \label{9788}
\end{equation}

The paper by Boas and Khavinson, \cite{BK97}, motivated many other works,
connecting the asymptotic behavior of $\mathrm{K}_{n}$ to various problems
in Functional Analysis (geometry of Banach spaces, unconditional basis
constant of spaces of polynomials, etc.); we refer to \cite{DP06} for a
panorama of the subject. Hence there was a big motivation in recent years in
determining the behavior of $\mathrm{K}_{n}$ for large values of $n$.

In \cite{DeFr06}, the lefthand side inequality of (\ref{9788}) was improved
to
\begin{equation*}
\mathrm{K}_{n}\geq c\sqrt{\log n/(n\log \log n)}.
\end{equation*}
In \cite{annals}, using the hypercontractivity of the polynomial
Bohnenblust--Hille inequality, the authors showed that%
\begin{equation}
\mathrm{K}_{n}=b_{n}\sqrt{\frac{\log n}{n}}\text{ with }\frac{1}{\sqrt{2}}%
+o(1)\leq b_{n}\leq 2.  \label{1212}
\end{equation}%
In this section we sketch how the H\"{o}lder inequality for mixed sums
played a fundamental role in the final answer to the solution, given in \cite%
{bps}, to the Bohr radius problem:
\begin{equation*}
\lim_{n\rightarrow \infty }\frac{\mathrm{K}_{n}}{\sqrt{\frac{\log n}{n}}}=1.
\end{equation*}

The solution has several ingredients, including the polynomial
Bohnenblust--Hille inequality. Using (\ref{polar}), Bohnenblust and Hille
were also able to have a polynomial version of this inequality: for any $%
m\geq 1$, there exists a constant $\mathrm{D}_{m}\geq 1$ such that, for any
complex $m$-homogeneous polynomial $P(\mathbf{z})=\sum_{|\alpha
|=m}a_{\alpha }\mathbf{z}^{\alpha }$ on $c_{0}$,
\begin{equation*}
\left( \sum_{|\alpha |=m}|a_{\alpha }|^{\frac{2m}{m+1}}\right) ^{\frac{m+1}{%
2m}}\leq \mathrm{D}_{m}\Vert P\Vert,
\end{equation*}
with
\begin{equation*}
\mathrm{D}_{m}=\left( \sqrt{2}\right) ^{m-1}\frac{m^{\frac{m}{2}}\left(
m+1\right) ^{\frac{m+1}{2}}}{2^{m}\left( m!\right) ^{\frac{m+1}{2m}}}.
\end{equation*}

In fact, it is not difficult to use polarization and obtain the polynomial
Bohnenblust-Hille inequality by using the multilinear Bohnenblust-Hille
inequality, but with \emph{bad} constants (the following approach can be
essentially found in \cite[Lemma 5]{defant2}). In fact, if $L$ is the polar
of $P$, from (\ref{800}) we have
\begin{align*}
{\sum\limits_{\left\vert \alpha \right\vert =m}}\left\vert a_{\alpha
}\right\vert ^{\frac{2m}{m+1}}& ={\sum\limits_{\left\vert \alpha \right\vert
=m}}\left( \binom{m}{\alpha }\left\vert L(e_{1}^{\alpha _{1}},\ldots
,e_{n}^{\alpha _{n}})\right\vert \right) ^{\frac{2m}{m+1}} \\
& ={\sum\limits_{\left\vert \alpha \right\vert =m}}\binom{m}{\alpha }^{\frac{%
2m}{m+1}}\left\vert L(e_{1}^{\alpha _{1}},\ldots ,e_{n}^{\alpha
_{n}})\right\vert ^{\frac{2m}{m+1}}.
\end{align*}%
However, for every choice of $\alpha $, the term
\begin{equation*}
\left\vert L(e_{1}^{\alpha _{1}},\ldots ,e_{n}^{\alpha _{n}})\right\vert ^{%
\frac{2m}{m+1}}
\end{equation*}%
is repeated $\binom{m}{\alpha }$ times in the sum
\begin{equation*}
{\sum\limits_{i_{1},\ldots ,i_{m}=1}^{n}}\left\vert L(e_{i_{1}},\ldots
,e_{i_{m}})\right\vert ^{\frac{2m}{m+1}}.
\end{equation*}%
Thus
\begin{equation*}
{\sum\limits_{\left\vert \alpha \right\vert =m}}\binom{m}{\alpha }^{\frac{2m%
}{m+1}}\left\vert L(e_{1}^{\alpha _{1}},\ldots ,e_{n}^{\alpha
_{n}})\right\vert ^{\frac{2m}{m+1}}={\sum\limits_{i_{1},\ldots ,i_{m}=1}^{n}}%
\binom{m}{\alpha }^{\frac{2m}{m+1}}\frac{1}{\binom{m}{\alpha }}\left\vert
L(e_{i_{1}},\ldots ,e_{i_{m}})\right\vert ^{\frac{2m}{m+1}}
\end{equation*}%
and, since
\begin{equation*}
\binom{m}{\alpha }\leq m!
\end{equation*}%
we have
\begin{equation*}
{\sum\limits_{\left\vert \alpha \right\vert =m}}\binom{m}{\alpha }^{\frac{2m%
}{m+1}}\left\vert L(e_{1}^{\alpha _{1}},\ldots ,e_{n}^{\alpha
_{n}})\right\vert ^{\frac{2m}{m+1}}\leq \left( m!\right) ^{\frac{m-1}{m+1}}{%
\sum\limits_{i_{1},\ldots ,i_{m}=1}^{n}}\left\vert L(e_{i_{1}},\ldots
,e_{i_{m}})\right\vert ^{\frac{2m}{m+1}}.
\end{equation*}%
We thus have
\begin{align*}
\left( {\sum\limits_{\left\vert \alpha \right\vert =m}}\left\vert a_{\alpha
}\right\vert ^{\frac{2m}{m+1}}\right) ^{\frac{m+1}{2m}}& \leq \left( \left(
m!\right) ^{\frac{m-1}{m+1}}{\sum\limits_{i_{1},\ldots ,i_{m}=1}^{n}}%
\left\vert L(e_{i_{1}},\ldots ,e_{i_{m}})\right\vert ^{\frac{2m}{m+1}%
}\right) ^{\frac{m+1}{2m}} \\
& =\left( m!\right) ^{\frac{m-1}{2m}}\left( {\sum\limits_{i_{1},\ldots
,i_{m}=1}^{n}}\left\vert L(e_{i_{1}},\ldots ,e_{i_{m}})\right\vert ^{\frac{2m%
}{m+1}}\right) ^{\frac{m+1}{2m}} \\
& \leq \left( m!\right) ^{\frac{m-1}{2m}}\mathrm{B}_{\mathbb{R},m}^{\mathrm{%
mult}}\left\Vert L\right\Vert .
\end{align*}%
On the other hand, since $\left\Vert L\right\Vert \leq \frac{m^{m}}{m!}%
\left\Vert P\right\Vert $ we obtain
\begin{align*}
\left( {\sum\limits_{\left\vert \alpha \right\vert =m}}\left\vert a_{\alpha
}\right\vert ^{\frac{2m}{m+1}}\right) ^{\frac{m+1}{2m}}& \leq \mathrm{B}_{%
\mathbb{R},m}^{\mathrm{mult}}\left( m!\right) ^{\frac{m-1}{2m}}\frac{m^{m}}{%
m!}\left\Vert P\right\Vert \\
& =\mathrm{B}_{\mathbb{R},m}^{\mathrm{mult}}\frac{m^{m}}{\left( m!\right) ^{%
\frac{m+1}{2m}}}\left\Vert P\right\Vert .
\end{align*}

Let us denote the best constant $\mathrm{D}_{m}$ in this inequality by $%
\mathrm{B}_{\mathbb{C},m}^{\mathrm{pol}}$. In \cite{annals} it was proved
that in fact these estimates could be essentially improved to $\left( \sqrt{2%
}\right) ^{m-1}.$ However using the variant of H\"{o}lder's inequality for
mixed $\ell _{p}$ spaces, together with some results from Complex Analysis
(see \cite{bps} for details) and with the subpolynomial estimates of the
multilinear Bohnenblust--Hille inequality (Section 5), one of the main
results of \cite{bps} shows that we can go much further:

\begin{theorem}
\label{i8} For any $\varepsilon >0$, there exists $\kappa >0$ such that, for
any $m\geq 1$,
\begin{equation*}
\mathrm{B}_{\mathbb{C},m}^{\mathrm{pol}}\leq \kappa (1+\varepsilon )^{m}.
\end{equation*}
\end{theorem}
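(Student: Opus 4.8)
The plan is to pass from the multilinear Bohnenblust--Hille inequality to the polynomial one via polarization, but \emph{without} the wasteful step of replacing $\binom{m}{\alpha}$ by $m!$ uniformly. Instead, I would keep the binomial coefficients inside the sum and estimate them by means of H\"older's inequality for mixed exponents (Theorem~\ref{gen.interp}), which is precisely the device that turns a crude $(m!)^{(m-1)/2m}$-type loss into something sub-exponential. Concretely, starting from $a_\alpha=\binom{m}{\alpha}L(e_1^{\alpha_1},\dots,e_n^{\alpha_n})$ one writes $|a_\alpha|^{2m/(m+1)}=\binom{m}{\alpha}^{2m/(m+1)}|L(\cdots)|^{2m/(m+1)}$ and then splits the exponent: choose $1/\lambda+1/\mu=1$ (to be optimized) and apply H\"older so that
\begin{equation*}
\sum_{|\alpha|=m}|a_\alpha|^{\frac{2m}{m+1}}\le\left(\sum_{|\alpha|=m}\binom{m}{\alpha}^{\frac{2m}{m+1}\lambda}\right)^{\frac1\lambda}\left(\sum_{|\alpha|=m}|L(e_1^{\alpha_1},\dots,e_n^{\alpha_n})|^{\frac{2m}{m+1}\mu}\right)^{\frac1\mu},
\end{equation*}
and the second factor can be fed into the multilinear Bohnenblust--Hille inequality once the exponent $\frac{2m}{m+1}\mu$ is pushed down to (or interpolated against) $\frac{2m}{m+1}$, again using Theorem~\ref{gen.interp} together with the elementary estimate $\sum_{|\alpha|=m}1=\binom{n+m-1}{m}$ for the ``filler'' coordinates.

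The second and crucial ingredient is the input from Complex Analysis alluded to in the statement: rather than the naive polarization bound $\|L\|\le\frac{m^m}{m!}\|P\|$, one uses the much sharper fact (going back to the work surrounding \cite{annals} and exploited in \cite{bps}) that for $m$-homogeneous polynomials on $c_0$ the relevant comparison constant between $P$ and the diagonal-type quantities behaves like $C^m$ for an absolute $C$, essentially because one integrates over the polytorus and uses the multilinear Khinchine inequality (Section~4) instead of brute-force polarization. Combining this with the subpolynomial estimate $\mathrm{B}_{\mathbb{C},m}^{\mathrm{mult}}\le\kappa_1 m^{(1-\gamma)/2}$ from Theorem~\ref{PROPINDUCMULT}, one arrives at a bound of the shape
\begin{equation*}
\mathrm{B}_{\mathbb{C},m}^{\mathrm{pol}}\le\kappa_1 m^{\frac{1-\gamma}{2}}\cdot C^m\cdot\left(\text{a factor coming from }\binom{n+m-1}{m}^{1/\mu}\text{ and the binomial-sum factor}\right)^{\frac{m+1}{2m}}.
\end{equation*}

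The main obstacle, and the point at which the parameter $\varepsilon$ enters, is controlling $\sum_{|\alpha|=m}\binom{m}{\alpha}^{t}$ for the chosen exponent $t=\frac{2m}{m+1}\lambda$ and making sure its $\frac1\lambda$-th (or $\frac1\lambda\cdot\frac{m+1}{2m}$-th) root grows no faster than $(1+\varepsilon)^m$. One has the multinomial identity $\sum_{|\alpha|=m}\binom{m}{\alpha}=n^m$, but that is the wrong normalization; what is needed is the dimension-free estimate obtained by writing $\binom{m}{\alpha}\le m!$ only where it costs little and otherwise using $\sum_{|\alpha|=m}\binom{m}{\alpha}^{2m/(m+1)}\le (\#\{|\alpha|=m\})^{1/(m+1)}\,(m!)^{\,?}$ via H\"older once more — here the exponent $\frac{2m}{m+1}<2$ is doing real work, since $(m!)^{1/(m+1)}=e^{O(\log m)}$ is subexponential while the number of multi-indices contributes only a bounded base raised to the $m$-th power after the outer $\frac{m+1}{2m}$ root. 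I would therefore spend most of the effort on a careful bookkeeping lemma: show that for a suitable choice of $\lambda=\lambda(m)\to 1$ (hence $\mu=\mu(m)\to\infty$, so that the ``extra'' exponent beyond $\frac{2m}{m+1}$ is negligible and the interpolation against Bohnenblust--Hille costs almost nothing), the combined constant is $\le\kappa(1+\varepsilon)^m$. Once that lemma is in place the theorem follows by assembling the multilinear Bohnenblust--Hille estimate, the sharp polynomial-to-multilinear passage from Complex Analysis, and the H\"older/interpolation manipulations above; the details of the first two are quoted from \cite{bps,annals} and Section~4, so only the combinatorial optimization is genuinely new in this sketch.
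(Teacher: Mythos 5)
There is a genuine gap here, in fact two. First, your opening H\"older step is dimension-dependent and therefore fatal: the factor $\bigl(\sum_{|\alpha|=m}\binom{m}{\alpha}^{\frac{2m}{m+1}\lambda}\bigr)^{1/\lambda}$ is at least $\binom{n+m-1}{m}^{1/\lambda}$, since every term is $\geq 1$ and there are $\binom{n+m-1}{m}$ of them; this tends to infinity with the number of variables $n$ for every fixed $m$ and every choice of $\lambda$. Since $\mathrm{B}_{\mathbb{C},m}^{\mathrm{pol}}$ must be independent of $n$, no tuning of $\lambda(m)$ can repair this, and your assertion that the number of multi-indices ``contributes only a bounded base raised to the $m$-th power'' is false: $\#\{|\alpha|=m\}$ grows like $n^{m}/m!$ in $n$. (The second factor is indeed harmless, since restricting to one representative per equivalence class only shrinks the Bohnenblust--Hille sum, but that does not save the product.) Second, even granting perfect combinatorial bookkeeping, the complex-analytic input you invoke --- a comparison constant ``behaving like $C^{m}$ for an absolute $C$'' --- cannot prove the theorem. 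A fixed base $C>1$, whether $e$ from polarization or $\sqrt{2}$ from hypercontractivity (the latter being exactly the bound of \cite{annals} recorded in \eqref{uu7}), yields $\kappa C^{m}$, not $\kappa(1+\varepsilon)^{m}$ for \emph{every} $\varepsilon>0$. Beating every fixed exponential base is precisely the content of Theorem \ref{i8}.

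Note that the paper itself does not prove Theorem \ref{i8}; it quotes \cite{bps}, and the argument there has a different architecture from yours. One works with the coefficients indexed by (equivalence classes of) multi-indices, splits the $m$ coordinates into blocks of sizes $k$ and $m-k$, and applies the mixed-norm inequality of Corollary \ref{cor33} over $\mathcal{P}_{k}(m)$: the inner sums over the $(m-k)$-block are controlled by Bayart's hypercontractive comparison of $L^{p}$-norms of homogeneous polynomials on the polytorus, the outer sums by the subpolynomial multilinear Bohnenblust--Hille constants, and the passage between polynomial and multilinear coefficient counts produces explicit ratios of factorials and polarization-type constants for degrees $m$ and $m-k$. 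The exponential losses are thus paid only on part of the coordinates, and choosing the block size as a suitable function of $\varepsilon$ and $m$ drives the resulting base down to $1+\varepsilon$. This block decomposition is the key idea missing from your sketch; without it the argument cannot get below a fixed exponential base, let alone survive the $n$-dependence introduced in your first step.
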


As we mentioned above, in \cite{annals}, using the hypercontractivity of the
polynomial Bohnenblust--Hille inequality, the authors showed that
\begin{equation}
\mathrm{K}_{n}=b_{n}\sqrt{\frac{\log n}{n}}\text{ with }\frac{1}{\sqrt{2}}%
+o(1)\leq b_{n}\leq 2.  \label{uu7}
\end{equation}
However, although \eqref{uu7} is quite precise, there was still uncertainity
in the behavior of the number $b_{n}$. By combining classical tools of
Complex Analysis (Harris' inequality \cite{Harris}), Bayart's inequality
\cite{bbbaaa}, Wiener's inequality \cite[Lemma 6.1]{bps}, and the
Kahane--Salem--Zygmund inequality (Theorem \ref{novv2}) together with
Theorem \ref{i8} the authors, in \cite{bps}, were finally able to provide
the final solution to the Bohr radius problem:

\begin{theorem}
The asymptotic growth of the $n-$dimensional Bohr radius is $\sqrt{\frac{%
\log n}{n}}$. In other words, $\displaystyle \lim_{n\rightarrow \infty }%
\frac{\mathrm{K}_{n}}{\sqrt{\frac{\log n}{n}}}=1.$
\end{theorem}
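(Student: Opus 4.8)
The plan is to prove the two matching one–sided estimates $\liminf_{n\to\infty}\mathrm{K}_{n}/\sqrt{(\log n)/n}\ge 1$ and $\limsup_{n\to\infty}\mathrm{K}_{n}/\sqrt{(\log n)/n}\le 1$. Since \eqref{uu7} already records $\tfrac{1}{\sqrt{2}}+o(1)\le b_{n}\le 2$, the genuinely new content is to push the constant up to $1$ on the lower side -- exactly where Theorem~\ref{i8} (equivalently, $\limsup_{m}(\mathrm{B}_{\mathbb{C},m}^{\mathrm{pol}})^{1/m}=1$) is used -- and down to $1$ on the upper side, which is where the Kahane--Salem--Zygmund inequality (Theorem~\ref{novv2}) is used. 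Recall that $\mathrm{K}_{n}\ge r$ means $\sum_{\alpha}|c_{\alpha}|\,r^{|\alpha|}\le \|f\|_{\mathbb{D}^{n}}$ for every bounded holomorphic $f=\sum_{\alpha}c_{\alpha}z^{\alpha}$ on $\mathbb{D}^{n}$ (the polynomial case extends to this one by approximation).

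\emph{Lower bound.} Fix $f=\sum_{\alpha}c_{\alpha}z^{\alpha}$ with $\|f\|_{\mathbb{D}^{n}}\le 1$ and decompose $f=\sum_{m\ge 0}P_{m}$ into its homogeneous parts, $P_{m}=\sum_{|\alpha|=m}c_{\alpha}z^{\alpha}$. Then $|c_{0}|\le 1$, and by a version of Wiener's inequality (\cite[Lemma 6.1]{bps}) one has $\|P_{m}\|_{\mathbb{D}^{n}}\le 1-|c_{0}|^{2}$ for all $m\ge 1$. For each $m\ge 1$, Hölder's inequality (from $\ell_{2m/(m+1)}$ to $\ell_{1}$ over the $\binom{n+m-1}{m}$ indices of degree $m$), followed by the polynomial Bohnenblust--Hille inequality, yields
\begin{equation*}
\sum_{|\alpha|=m}|c_{\alpha}|\ \le\ \binom{n+m-1}{m}^{\frac{m-1}{2m}}\Big(\sum_{|\alpha|=m}|c_{\alpha}|^{\frac{2m}{m+1}}\Big)^{\frac{m+1}{2m}}\ \le\ \binom{n+m-1}{m}^{\frac{m-1}{2m}}\,\mathrm{B}_{\mathbb{C},m}^{\mathrm{pol}}\,\|P_{m}\|_{\mathbb{D}^{n}}.
\end{equation*}
Summing over $m$ with $z\in r\mathbb{D}^{n}$, $r\le 1$, gives $\sum_{\alpha}|c_{\alpha}|\,r^{|\alpha|}\le |c_{0}|+(1-|c_{0}|^{2})\,S(r,n)$, where $S(r,n):=\sum_{m\ge 1}\mathrm{B}_{\mathbb{C},m}^{\mathrm{pol}}\binom{n+m-1}{m}^{\frac{m-1}{2m}}r^{m}$. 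An elementary maximization of $u\mapsto u+(1-u^{2})S$ over $u\in[0,1]$ shows this is $\le 1$ as soon as $S(r,n)\le\tfrac12$; hence $\mathrm{K}_{n}\ge\sup\{r:\,S(r,n)\le\tfrac12\}$. It remains to estimate $S(r,n)$ with $r=t\sqrt{(\log n)/n}$. Here I would invoke Theorem~\ref{i8} in the form $\mathrm{B}_{\mathbb{C},m}^{\mathrm{pol}}\le\kappa(1+\varepsilon)^{m}$, together with Stirling's formula for $\binom{n+m-1}{m}\sim n^{m}/m!$ in the relevant range $m=O(\log n)$. The series is then unimodal, concentrated around degree $m^{\ast}\sim(1+\varepsilon)^{2}t^{2}\log n$, and a Laplace-type estimate gives, up to polylogarithmic factors, $S(r,n)\lesssim\kappa\,n^{\frac{(1+\varepsilon)^{2}t^{2}-1}{2}}$, which tends to $0$ -- hence is $\le\tfrac12$ for large $n$ -- precisely when $(1+\varepsilon)t<1$. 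Letting $n\to\infty$ and then $\varepsilon\to 0$ yields $\liminf_{n}\mathrm{K}_{n}/\sqrt{(\log n)/n}\ge t$ for every $t<1$. (With the older bound $\mathrm{B}_{\mathbb{C},m}^{\mathrm{pol}}\le(\sqrt{2})^{m}$ the factor $(1+\varepsilon)t$ would be replaced by $\sqrt{2}\,t$, giving only $\liminf_{n}b_{n}\ge 1/\sqrt{2}$ -- precisely the estimate in \eqref{uu7} coming from \cite{annals} -- which is why the subexponential growth rate of $\mathrm{B}_{\mathbb{C},m}^{\mathrm{pol}}$ is the decisive ingredient.)

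\emph{Upper bound and main obstacle.} For $\limsup_{n}b_{n}\le 1$ I would use the Kahane--Salem--Zygmund inequality (Theorem~\ref{novv2}) to produce, for a suitably chosen degree $m=m(n)$, a homogeneous polynomial whose uniform norm on $\mathbb{D}^{n}$ is small compared with the $\ell_{1}$-norm of its coefficient vector; substituting it into the definition of $\mathrm{K}_{n}$ forces $\mathrm{K}_{n}\le(1+o(1))\sqrt{(\log n)/n}$. To obtain the sharp constant $1$ rather than the Boas--Khavinson value $2$, one needs the classical complex-analytic refinements used in \cite{bps} -- Harris' inequality \cite{Harris} and Bayart's inequality \cite{bbbaaa} -- to compare, with asymptotically optimal constants, the uniform norm of a polynomial with the Minkowski- and coefficient-functionals appearing in the Bohr inequality, together with the optimal balancing of $m(n)$. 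I expect precisely this balancing to be the crux: on the lower side one must play the two competing exponential factors $\mathrm{B}_{\mathbb{C},m}^{\mathrm{pol}}$ and $\binom{n+m-1}{m}^{(m-1)/(2m)}$ against $r^{m}$ so that the threshold comes out as $t<1$ and not $t<c$ for some $c<1$, which works only because of Theorem~\ref{i8}; on the upper side the difficulty is to squeeze the constant $1$ out of the Kahane--Salem--Zygmund construction via those complex-analytic inputs. Once both extremes are pinned down, combining them gives $b_{n}\to 1$, i.e. $\mathrm{K}_{n}=(1+o(1))\sqrt{(\log n)/n}$.
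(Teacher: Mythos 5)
Your proposal is correct and follows essentially the same route as the paper, which itself only lists the ingredients (Wiener's lemma, H\"older's inequality, Theorem~\ref{i8}, and the Kahane--Salem--Zygmund inequality) and defers the details to \cite{bps}; in particular your lower-bound computation (Wiener $+$ H\"older $+$ the subexponential polynomial Bohnenblust--Hille constants, with the threshold $S(r,n)\le\tfrac12$ and the Laplace-type estimate $S\lesssim \kappa\, n^{((1+\varepsilon)^2t^2-1)/2}$) is exactly the argument used there. The one inaccuracy is the role you assign to Harris' and Bayart's inequalities: in \cite{bps} they are used to prove Theorem~\ref{i8} itself (i.e., they belong to the lower-bound side), whereas the sharp upper bound $\mathrm{K}_n\le(1+o(1))\sqrt{(\log n)/n}$ needs nothing beyond the Kahane--Salem--Zygmund polynomial of Theorem~\ref{novv2}: testing the Bohr inequality on $P_{m,n}$ gives $\mathrm{K}_n\le\left(\sqrt{32\log(6m)}\right)^{1/m}(m!)^{1/(2m)}\,n^{(1-m)/(2m)}$, and choosing $m\sim\log n$ together with Stirling's formula already yields the constant $1$.
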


The crucial step to complete the proof was the improvement of the estimates
of the polynomial Bohnenblust--Hille inequality that was only achieved by
means of the H\"{o}lder inequality for mixed sums.

\subsection{Hardy--Littlewood's inequality constants}

Although H\"{o}lder's inequality for mixed $\ell _{p}$ spaces dates back to
the 1960's, its full importance in the subjects mentioned throughout this
paper was just very recently realized. New consequences are still appearing
(see, for instance \cite{joedson,aps,carando}). The last applications of the
H\"{o}lder inequality for mixed $\ell _{p}$ spaces presented here concern
the Hardy--Littlewood inequality and the theory of multiple summing
multilinear operators. As in the case of the Bohnenblust--Hille inequality
(Section 5) the H\"{o}lder inequality for multiple exponents allows a
significant improvement in the constants of the Hardy--Littlewood inequality.

Let $\mathbb{K}$ be $\mathbb{R}$ or $\mathbb{C}$. Given an integer $m\geq 2$%
, the Hardy--Littlewood inequality (see \cite{alb, hardy,pra}) asserts that
for $2m\leq p\leq \infty $ there exists a constant $\mathrm{C}_{m,p}^{%
\mathbb{K}}\geq 1$ such that, for all continuous $m$--linear forms $T:\ell
_{p}^{n}\times \cdots \times \ell _{p}^{n}\rightarrow \mathbb{K}$ and all
positive integers $n$,
\begin{equation}
\left( \sum_{j_{1},\ldots ,j_{m}=1}^{n}\left\vert T(e_{j_{1}},\ldots
,e_{j_{m}})\right\vert ^{\frac{2mp}{mp+p-2m}}\right) ^{\frac{mp+p-2m}{2mp}%
}\leq \mathrm{C}_{m,p}^{\mathbb{K}}\left\Vert T\right\Vert .  \label{i99}
\end{equation}%
Using the generalized Kahane-Salem-Zygmund inequality (see \cite{alb}) one
can easily verify that the exponents $\frac{2mp}{mp+p-2m}$ are optimal. When
$p=\infty,$ using that $\frac{2mp}{mp+p-2m}=\frac{2m}{m+1}$, we recover the
classical Bohnenblust--Hille inequality (see Theorem \ref{PROPINDUCMULT} and
\cite{bh}).

From \cite{bps} we know that $\mathrm{B}_{\mathbb{K},m}^{\mathrm{mult}}$ has
a subpolynomial growth. On the other hand, the best known upper bounds for
the constants in (\ref{i99}) were, until just recently, $\left( \sqrt{2}%
\right) ^{m-1}$ (see \cite{alb, n, dimant}). Although, a suitable use of
Theorem \ref{gen.interp} shows that $\left( \sqrt{2}\right) ^{m-1}$ can be
improved (see \cite{aps}) to
\begin{equation*}
\mathrm{C}_{m,p}^{\mathbb{R}}\leq \left( \sqrt{2}\right) ^{\frac{2m\left(
m-1\right) }{p}}\left( \mathrm{B}_{\mathbb{R},m}^{\mathrm{mult}}\right) ^{%
\frac{p-2m}{p}}
\end{equation*}%
for real scalars and to
\begin{equation*}
\mathrm{C}_{m,p}^{\mathbb{C}}\leq \left( \frac{2}{\sqrt{\pi }}\right) ^{%
\frac{2m(m-1)}{p}}\left( \mathrm{B}_{\mathbb{C},m}^{\mathrm{mult}}\right) ^{%
\frac{p-2m}{p}}
\end{equation*}
for complex scalars. These estimates are substantially better than $\left(%
\sqrt{2}\right) ^{m-1}$ because $\mathrm{B}_{\mathbb{K},m}^{\mathrm{mult}}$
has a subpolynomial growth. In particular, if $p>m^{2}$ we conclude that $%
\mathrm{C}_{m,p}^{\mathbb{K}}$ has a subpolynomial growth.

\subsection{Separately summing operators}

H\"{o}lder's inequality is also used to generalize recent results on the
theory of multiple summing multilinear operators. In \cite{defa}, and for $m$%
-linear operators on $q$-cotype Banach spaces, the authors introduced the
notion \emph{separately $(r,1)$-summing}, with $1\leq r\leq q<\infty$, which
means that, for any $(m-1)$-coordinates fixed, the resulting linear operator
is $(r,1)$-summing. Using separately summing maps, the authors concluded
that the initial operator is multiple $\left( \frac{qrm}{q+\left(m-1\right)r}%
, 1\right)$-summing. In \cite{joedson} it is presented the concept of $n$%
-separability summing, which stands for the $m$-linear operators that are
multiple summing in $n$-coordinates, when there are $m-n$ other coordinates
fixed. Using suitable interpolation, the authors provide $N$-separability
from $n$-separability summing, with $n<N\leq m$. This result generalizes the
previous one and provide more efficient exponents in some special cases.
Moreover, it is also useful to provides estimates for the constants of some
variation of Bohnenblust-Hille inequalities introduced in \cite[Appendix A]%
{ddss} and \cite{111}.


\begin{bibdiv}
\begin{biblist}
\bib{aaa}{article}{
  author={Aaronson, S.},
  author={Ambainis, A.},
  title={The need for structure in quantum speedups},
  journal={In Proceedings of ICS (Innovations in Computer Science). Tsinghua University Press.},
  volume={arXiv:0911.0996.},
  date={2011},
  pages={338--352},
}

\bib{alb}{article}{
  author={Albuquerque, N.},
  author={Bayart, F.},
  author={Pellegrino, D.},
  author={Seoane-Sep{\'u}lveda, J.B.},
  title={Sharp generalizations of the multilinear Bohnenblust-Hille inequality},
  journal={J. Funct. Anal.},
  volume={266},
  date={2014},
  number={6},
  pages={3726--3740},
  doi={10.1016/j.jfa.2013.08.013},
}

\bib{n}{article}{
  author={Albuquerque, N.},
  author={Bayart, F.},
  author={Pellegrino, D.},
  author={Seoane-Sep{\'u}lveda, J.B.},
  title={Optimal Hardy-Littlewood type inequalities for polynomials and multilinear operators},
  journal={Israel J. Math.},
  status={in press},
}

\bib{joedson}{article}{
  author={Albuquerque, N.},
  author={N\'{u}\~{n}ez-Alarc\'{o}n, D.},
  author={Santos, J.},
  author={Serrano-Rodr\'{\i }guez, D.M.},
  title={Absolutely summing multilinear operators via interpolation},
  status={arXiv:1404.4949 (2014)},
}

\bib{aps}{article}{
	author={Ara{\'u}jo, G.},
	author={Pellegrino, D.},
	author={da Silva e Silva, D.D.P.},
	title={On the upper bounds for the constants of the Hardy--Littlewood inequality},
	journal={J. Funct. Anal.},
	volume={267},
	date={2014},
	number={6},
	pages={1878--1888},
	doi={10.1016/j.jfa.2014.06.014},
}

\bib{bbbaaa}{article}{
  author={Bayart, F.},
  title={Hardy spaces of Dirichlet series and their composition operators},
  journal={Monatsh. Math.},
  volume={136},
  date={2002},
  pages={203--236},
}

\bib{bps}{article}{
  author={Bayart, F.},
  author={Pellegrino, D.},
  author={Seoane-Sep\'{u}lveda, J.B.},
  title={The Bohr radius of the $n$-dimensional polydisk is equivalent to $\sqrt {(\log n)/n}$},
  journal={Adv. Math.},
  volume={264},
  date={2014},
  pages={726--746},
  doi={http://dx.doi.org/10.1016/j.aim.2014.07.029},
}

\bib{bene}{article}{
  author={Benedek, A.},
  author={Panzone, R.},
  title={The space $L^{p}$, with mixed norm},
  journal={Duke Math. J.},
  volume={28},
  date={1961},
  pages={301--324},
}

\bib{berg.lofst}{book}{
  author={Bergh, J.},
  author={L{\"o}fstr{\"o}m, J.},
  title={Interpolation spaces. An introduction},
  note={Grundlehren der Mathematischen Wissenschaften, No. 223},
  publisher={Springer-Verlag},
  place={Berlin},
  date={1976},
  pages={x+207},
}

\bib{BK97}{article}{
  author={Boas, H.P.},
  author={Khavinson, D.},
  title={Bohr's power series theorem in several variables},
  journal={Proc. Amer. Math. Soc.},
  volume={125},
  date={1997},
  number={10},
  pages={2975--2979},
}

\bib{korea}{article}{
  author={Boas, H.P.},
  title={Majorant series},
  note={Several complex variables (Seoul, 1998)},
  journal={J. Korean Math. Soc.},
  volume={37},
  date={2000},
  number={2},
  pages={321--337},
}

\bib{bh}{article}{
  author={Bohnenblust, H.F.},
  author={Hille, E.},
  title={On the absolute convergence of Dirichlet series},
  journal={Ann. of Math. (2)},
  volume={32},
  date={1931},
  number={3},
  pages={600--622},
}

\bib{bohr1913}{article}{
  author={Bohr, H.},
  title={\"Uber die gleichm\"a{\ss }ige Konvergenz Dirichletscher Reihen},
  journal={J. Reine Angew. Math. },
  volume={143},
  date={1913},
  pages={203--211},
}

\bib{bohr1914}{article}{
  author={Bohr, H.},
  title={L\"osung des absoluten Konvergenzproblems einer allgemeinen Klasse dirichletscher Reihen},
  journal={Acta Math.},
  volume={36},
  date={1913},
  pages={197--240},
}

\bib{bohr1914b}{article}{
  author={Bohr, H.},
  title={A theorem concerning power series},
  journal={Proc. London Math. Soc.},
  volume={13},
  date={1914},
  pages={1--5},
}

\bib{B}{article}{
  author={Bourgain, J.},
  title={On the distribution of Dirichlet sums},
  journal={J. Anal. Math.},
  volume={60},
  date={1993},
  pages={21--32},
  doi={10.1007/BF03341964},
}

\bib{briet}{book}{
  author={Bri\"et, J.},
  title={Grothendieck Inequalities, Nonlocal Games and Optimization},
  note={Thesis (Ph.D.)--Universiteit van Amsterdam},
  date={2011},
}

\bib{cjmps2014}{article}{
  author={Campos, J.R.},
  author={Jim{\'e}nez-Rodr{\'{\i }}guez, P.},
  author={Mu{\~n}oz-Fern{\'a}ndez, G.A.},
  author={Pellegrino, D.},
  author={Seoane-Sep\'{u}lveda, J.B.},
  title={On the real polynomial Bohnenblust-Hille inequality},
  journal={Linear Algebra Appl.},
  status={in press},
}

\bib{carando}{article}{
  author={Carando, D.},
  author={Defant, A.},
  author={Sevilla-Peris, P.},
  title={A note on Bohnenblust--Hille--Helson type inequality},
  journal={arXiv:1403.7033 (2013)},
}

\bib{veve}{article}{
  author={Ferreira, L.C.F.},
  author={Medeiros, E.S.},
  author={Montenegro, M.},
  title={A class of elliptic equations in anisotropic spaces},
  journal={Ann. Mat. Pura Appl. (4)},
  volume={192},
  date={2013},
  number={4},
  pages={539--552},
  doi={10.1007/s10231-011-0236-8},
}

\bib{CHSH}{article}{
  author={Clauser, J.},
  author={Horne, M.},
  author={Shimony, R.},
  author={Holt, A.},
  title={Proposed experiment to test local hidden-variable theories},
  journal={Phys. Rev. Lett.},
  volume={23},
  number={15},
  date={1969},
  pages={880--4},
}

\bib{davie}{article}{
  author={Davie, A.M.},
  title={Quotient algebras of uniform algebras},
  journal={J. London Math. Soc. (2)},
  volume={7},
  date={1973},
  pages={31--40},
}

\bib{DeFr06}{article}{
  author={Defant, A.},
  author={Frerick, L.},
  title={A logarithmic lower bound for multi-dimensional Bohr radii},
  journal={Israel J. Math.},
  volume={152},
  date={2006},
  pages={17--28},
}

\bib{annals}{article}{
  author={Defant, A.},
  author={Frerick, L.},
  author={Ortega-Cerd{\`a}, J.},
  author={Ouna{\"{\i }}es, M.},
  author={Seip, K.},
  title={The Bohnenblust-Hille inequality for homogeneous polynomials is hypercontractive},
  journal={Ann. of Math. (2)},
  volume={174},
  date={2011},
  number={1},
  pages={485--497},
}

\bib{defa}{article}{
  author={Defant, A.},
  author={Popa, D.},
  author={Schwarting, U.},
  title={Coordinatewise multiple summing operators in Banach spaces},
  journal={J. Funct. Anal.},
  volume={259},
  date={2010},
  number={1},
  pages={220--242},
}

\bib{DP06}{article}{
  author={Defant, A.},
  author={Prengel, C.},
  title={Harald Bohr meets Stefan Banach},
  conference={ title={Methods in Banach space theory}, },
  book={ series={London Math. Soc. Lecture Note Ser.}, volume={337}, publisher={Cambridge Univ. Press}, place={Cambridge}, },
  date={2006},
  pages={317--339},
}

\bib{defant2}{article}{
  author={Defant, A.},
  author={Sevilla-Peris, P.},
  title={A new multilinear insight on Littlewood's 4/3-inequality},
  journal={J. Funct. Anal.},
  volume={256},
  date={2009},
  number={5},
  pages={1642--1664},
  doi={10.1016/j.jfa.2008.07.005},
}

\bib{sevillap}{article}{
   author={Defant, Andreas},
   author={Sevilla-Peris, Pablo},
   title={The Bohnenblust-Hille cycle of ideas from a modern point of view},
   journal={Funct. Approx. Comment. Math.},
   volume={50},
   date={2014},
   number={1, [2013 on table of contents]},
   pages={55--127},
   issn={0208-6573},
   review={\MR{3189502}},
   doi={10.7169/facm/2014.50.1.2},
}

\bib{dimant}{article}{
  author={Dimant, A.},
  author={Sevilla-Peris, P.},
  title={Summation of coefficients of polynomials on $\ell _{p}$ spaces},
  journal={arXiv:1309.6063v1 [math.FA]},
}

\bib{dineen}{book}{
  author={Dineen, S.},
  title={Complex analysis on infinite-dimensional spaces},
  series={Springer Monographs in Mathematics},
  publisher={Springer-Verlag London, Ltd., London},
  date={1999},
  pages={xvi+543},
  isbn={1-85233-158-5},
  doi={10.1007/978-1-4471-0869-6},
}

\bib{pams2014}{article}{
  author={Diniz, D.},
  author={Mu\~{n}oz-Fern\'{a}ndez, G.A.},
  author={Pellegrino, D.},
  author={Seoane-Sep\'{u}lveda, J.B.},
  title={Lower bounds for the constants in the Bohnenblust-Hille inequality: the case of real scalars},
  journal={Proc. Amer. Math. Soc.},
  volume={142},
  date={2014},
  pages={575--580},
}

\bib{E}{article}{
  author={Enflo, P.H.},
  author={Gurariy, V.I.},
  author={Seoane-Sep{\'u}lveda, J.B.},
  title={On Montgomery's conjecture and the distribution of Dirichlet sums},
  journal={J. Funct. Anal.},
  volume={267},
  date={2014},
  number={4},
  pages={1241--1255},
  doi={10.1016/j.jfa.2014.04.001},
}

\bib{FordGal}{article}{
  author={Ford, J.},
  author={G{\'a}l, A.},
  title={Hadamard tensors and lower bounds on multiparty communication complexity},
  conference={ title={Automata, languages and programming}, },
  book={ series={Lecture Notes in Comput. Sci.}, volume={3580}, publisher={Springer, Berlin}, },
  date={2005},
  pages={1163--1175},
}

\bib{fff}{article}{
  author={Fournier, J.J.F.},
  title={Mixed norms and rearrangements: Sobolev's inequality and Littlewood's inequality},
  journal={Ann. Mat. Pura Appl. (4)},
  volume={148},
  date={1987},
  pages={51--76},
  issn={0003-4622},
  review={\MR {932758 (89e:46037)}},
  doi={10.1007/BF01774283},
}

\bib{garling}{book}{
  author={Garling, D.J.H.},
  title={Inequalities: a journey into linear analysis},
  publisher={Cambridge University Press, Cambridge},
  date={2007},
  pages={x+335},
  isbn={978-0-521-69973-0},
  doi={10.1017/CBO9780511755217},
}

\bib{Haa}{article}{
  author={Haagerup, U.},
  title={The best constants in the Khintchine inequality},
  journal={Studia Math.},
  volume={70},
  date={1981},
  number={3},
  pages={231--283 (1982)},
}

\bib{hardy}{article}{
  author={Hardy, G.},
  author={Littlewood, J.E.},
  title={Bilinear forms bounded in space $[p,q]$},
  journal={Quart. J. Math.},
  volume={5},
  date={1934},
  pages={241--254},
}

\bib{Harris}{inproceedings}{
  author={Harris, L.A.},
  title={Bounds on the derivatives of holomorphic functions of vectors},
  conference={ title={Analyse fonctionnelle et applications (Comptes Rendus Colloq. Analyse, Inst. Mat., Univ. Federal Rio de Janeiro, Rio de Janeiro, 1972)}, },
  book={ publisher={Hermann}, place={Paris}, },
  date={1975},
  pages={145--163. Actualit\'es Aci. Indust., No. 1367},
}

\bib{holder1889}{article}{
  author={H{\"{o}}lder, O.},
  title={\"{U}ber ein Mittelwertsatz},
  journal={Nachr. Akad. Wiss. G\"{o}ttingen Math. - Phys. Kl.},
  date={1889},
  pages={38--47.},
}

\bib{Kahane}{book}{
  author={Kahane, J.-P.},
  title={Some random series of functions},
  series={Cambridge Studies in Advanced Mathematics},
  volume={5},
  edition={2},
  publisher={Cambridge University Press, Cambridge},
  date={1985},
  pages={xiv+305},
  isbn={0-521-24966-X},
  isbn={0-521-45602-9},
}

\bib{KKw}{article}{
  author={K\"{o}nig, H.},
  title={On the best constants in the Khintchine inequality for variables on spheres},
  journal={Math. Seminar, Universit\"{a}t Kiel},
  date={1998},
}

\bib{Maligranda}{article}{
   author={Maligranda, Lech},
   title={Why H\"older's inequality should be called Rogers' inequality},
   journal={Math. Inequal. Appl.},
   volume={1},
   date={1998},
   number={1},
   pages={69--83},
   issn={1331-4343},
   review={\MR{1492911 (98j:01024)}},
   doi={10.7153/mia-01-05},
}

\bib{Montanaro}{article}{
  author={Montanaro, A.},
  title={Some applications of hypercontractive inequalities in quantum information theory},
  journal={J. Math. Physics},
  volume={53},
  number={12},
  date={2012},
  pages={122206},
}

\bib{M}{book}{
  author={Montgomery, Hugh L.},
  title={Topics in multiplicative number theory},
  series={Lecture Notes in Mathematics, Vol. 227},
  publisher={Springer-Verlag},
  place={Berlin},
  date={1971},
  pages={ix+178},
}

\bib{mujica}{book}{
  author={Mujica, J.},
  title={Complex analysis in Banach spaces},
  series={North-Holland Mathematics Studies},
  volume={120},
  note={Holomorphic functions and domains of holomorphy in finite and infinite dimensions; Notas de Matem\'atica [Mathematical Notes], 107},
  publisher={North-Holland Publishing Co., Amsterdam},
  date={1986},
  pages={xii+434},
  isbn={0-444-87886-6},
}

\bib{ddss}{article}{
  author={Nu{\~n}ez-Alarc{\'o}n, D.},
  author={Pellegrino, D.},
  author={Seoane-Sep{\'u}lveda, J.B.},
  title={On the Bohnenblust-Hille inequality and a variant of Littlewood's 4/3 inequality},
  journal={J. Funct. Anal.},
  volume={264},
  date={2013},
  number={1},
  pages={326--336},
}

\bib{111}{article}{
  author={Nu{\~n}ez-Alarc{\'o}n, D.},
  author={Pellegrino, D.},
  author={Seoane-Sep{\'u}lveda, J.B.},
  author={Serrano-Rodr{\'{\i }}guez, D.M.},
  title={There exist multilinear Bohnenblust-Hille constants $(C_n)_{n=1}^\infty $ with $\lim _{n\rightarrow \infty }(C_{n+1}-C_n)=0$},
  journal={J. Funct. Anal.},
  volume={264},
  date={2013},
  number={2},
  pages={429--463},
}

\bib{pseo}{article}{
  author={Pellegrino, D.},
  author={Seoane-Sep\'{u}lveda, J.B.},
  title={New upper bounds for the constants in the Bohnenblust-Hille inequality},
  journal={J. Math. Anal. Appl.},
  volume={386},
  date={2012},
  number={1},
  pages={300--307},
}

\bib{pra}{article}{
  author={Praciano-Pereira, T.},
  title={On bounded multilinear forms on a class of $l^{p}$ spaces},
  journal={J. Math. Anal. Appl.},
  volume={81},
  date={1981},
  number={2},
  pages={561--568},
  doi={10.1016/0022-247X(81)90082-2},
}

\bib{ursula}{book}{
  author={Schwarting, U.},
  title={Vector Valued Bohnenblust-Hille Inequalities},
  note={Thesis (Ph.D.)--Universit\"at Oldenburg},
  date={2013},
}

\bib{zyg}{book}{
  author={Zygmund, A.},
  title={Trigonometric series. Vol. I, II},
  series={Cambridge Mathematical Library},
  edition={3},
  publisher={Cambridge University Press, Cambridge},
  date={2002},
  pages={xii; Vol. I: xiv+383 pp.; Vol. II: viii+364},
  isbn={0-521-89053-5},
}

\end{biblist}
\end{bibdiv}

\end{document}